%
%
%

\documentclass[graybox]{svmult}

\usepackage{appendix}
\usepackage{mathptmx}       
\usepackage{helvet}         
\usepackage{courier}        
\usepackage{type1cm}        
%
\usepackage{bm}
\usepackage{amsmath}
\usepackage{amssymb}
\usepackage{amsfonts}

\usepackage{makeidx}         
\usepackage{graphicx}        
\usepackage{multicol}        
\usepackage[bottom]{footmisc}
\usepackage{algorithmic}
\usepackage[ruled]{algorithm}
\usepackage {dsfont}

\RequirePackage{amsmath}
\RequirePackage{amssymb}

\usepackage[unicode=true,plainpages=false]{hyperref}
\hypersetup{colorlinks=false,pdfborder={0 0 0.1},linkcolor=magenta,anchorcolor=magenta,urlcolor=blue,citecolor=blue,
          pdftitle={Computation of the Response Surface in the Tensor Train data format},
          pdfauthor={Sergey Dolgov, Boris N. Khoromskij, Alexander Litvinenko and Hermann G. Matthies}
          }

\usepackage{tikz} \usetikzlibrary{positioning,shapes}
\usepackage{pgfplots} \usetikzlibrary{plotmarks}
\usetikzlibrary{calc,backgrounds}
\input{pgfart.sty}

\setlength{\tabcolsep}{5pt}

\DeclareMathAlphabet{\mathitbf}{OML}{cmm}{b}{it}
\def\jmath{j}




\newcommand{\dotprod}[2]{ \left \langle #1, #2 \right \rangle }

\def\eps{\varepsilon}
\def\diag{\mathop{\rm diag}\nolimits}
\def\supp{\mathop{\rm supp}\nolimits}

\def\nin{\hspace{0.22em}/ \hspace{-0.88em} \in}       

\def\u{\mathbf{u}}
\def\f{\mathbf{f}}
\def\K{\mathbf{K}}
\def\BDelta{\bm{\Delta}}
\def\J{\mathcal{J}}

\def\ttp{\mathtt{p}}

\makeindex

\newcommand{\Nset}{\mathbb{N}}
\newcommand{\cov}{\mbox{cov}}
\newcommand{\var}{\mbox{var}}

\def\thetab{{\theta}}


%




\begin{document}

\title*{Computation of the Response Surface in the Tensor Train data format}
\author{Sergey Dolgov, Boris N. Khoromskij, Alexander Litvinenko and Hermann G. Matthies}
\institute{H.G. Matthies (\email{wire@tu-bs.de}) \at Institute for Scientific Computing, Technische Universit\"at Braunschweig, Hans-Sommerstr. 65, Brunswick, Germany,
\and  A. Litvinenko (\email{alexander.litvinenko@kaust.edu.sa}) \at King Abdullah University of Science and Technology (KAUST), Thuwal, Saudi Arabia,
\and Sergey Dolgov (\email{dolgov@mis.mpg.de}) and Boris N. Khoromskij (\email{bokh@mis.mpg.de})  \at Max-Planck-Institut f\"ur Mathematik
in den Naturwissenschaften, 
Inselstra\ss e 22,
04103 Leipzig, Germany.}

%
\maketitle

\abstract{
We apply the Tensor Train (TT) approximation to construct the Polynomial Chaos Expansion (PCE) of a random field, and solve the stochastic elliptic diffusion PDE with the stochastic Galerkin discretization.
We compare two strategies of the polynomial chaos expansion: sparse and full polynomial (multi-index) sets.
In the full set, the polynomial orders are chosen independently in each variable, which provides higher flexibility and accuracy.
However, the total amount of degrees of freedom grows exponentially with the number of stochastic coordinates.
To cope with this curse of dimensionality, the data is kept compressed in the TT decomposition, a recurrent low-rank factorization.
PCE computations on sparse grids sets are extensively studied, but the TT representation for PCE is a novel approach that is investigated in this paper.
We outline how to deduce the PCE from the covariance matrix, assemble the Galerkin operator, and evaluate some post-processing (mean, variance, Sobol indices), staying within the low-rank framework.
The most demanding are two stages.
First, we interpolate PCE coefficients in the TT format using a few number of samples, which is performed via the block cross approximation method.
Second, we solve the discretized equation (large linear system) via the alternating minimal energy algorithm.
In the numerical experiments we demonstrate that the full expansion set encapsulated in the TT format is indeed preferable in cases when high accuracy and high polynomial orders are required.
}

\section{Motivation}
\label{sec:1}

During the last years, low-rank tensor techniques were successfully
applied to the solution of high-dimensional stochastic and parametric
PDEs \cite{doostan-non-intrusive-2013, khos-pde-2010, khos-qtt-2010, KhSch-Galerkin-SPDE-2011, tobkress-param-2011, tobler-htdmrg-2011, matthieszander-lowrank-2012, Nouy10, Nouy07, litvinenko-spde-2013, wahnert-stochgalerkin-2014, Grasedyck_Kressner,   kunoth-2013, LitvSampling13}.
With standard techniques it is almost
impossible to store all entries of the discretised high-dimensional
operator explicitly. Besides of the storage one should solve this
high-dimensional problem and obtain solution in a reasonable time. Very often some additional efficient post-processing
is required, e.g. visualization of large data sets, computation of the mean, variance, Sobol indices or exceedance probabilities
of the solution or of a quantity of interest (QoI). We will perform this post-processing in the low-rank tensor format.

Situations where one is concerned with uncertainty quantification often
come in the following guise: we are investigating some physical system which
is modeled as follows:
\begin{equation} \label{eq:general_model}
F(\ttp;u(x,\omega)) = f(\ttp;x;\omega),
\end{equation}
where $u(\cdot)$ describes the state of the system
lying in a Hilbert space $\mathcal{V}$,
$F$ is an operator, modeling the physics of the system, and $f$ is some external
influence (action / excitation / loading).
In particular, $F(\ttp;u(\cdot))$ could be some parameter-dependent differential operator, for example,
\begin{equation}
\label{eq:elliptic}
-\nabla \cdot (\kappa(x,\omega) \nabla u(x,\omega))= f(x,\omega), \quad x \in D\subset \mathbb{R}^d,
\end{equation}
where $d$ is the spatial dimension, $\ttp:=\kappa(x,\omega)$ is a random field dependent on a random
parameter $\omega$ in some probability space $\Omega$, and $\mathcal{V} = L_2(D)$.
In this article we approximate/represent the input coefficient $\kappa(x,\omega)$ in the low-rank tensor format, compute the solution $u(x,\omega)$ and perform all post-processing in the same low-rank data format.
The model \eqref{eq:general_model} depends on some parameter $\ttp \in \mathcal{P}$; in the context of uncertainty quantification the actual value of $\ttp$ is undefined \cite{UQLitvinenko12, Rosic2013}.
Often this uncertainty is modeled by giving the set $\mathcal{P}$ a probability measure.
Evaluation and quantification of the uncertainty often requires functionals of the state $\varPsi(u(\ttp))$, and the functional dependence of $u$ on $\ttp$ becomes important.
Similar situations arise in design, where $\ttp$ may be a design parameter still to be chosen, and one may seek a design such that a functional $\varPsi(u(\ttp))$ is e.g. maximized.

The situation just sketched involves a number of objects which are functions
of the parameter values.  While evaluating $F(\ttp)$ or $f(\ttp)$ for a certain
$\ttp$ may be straightforward, one may easily envisage situations where evaluating
$u(\ttp)$ or $\Psi(u(\ttp))$ may be very costly as it may rely on some very time
consuming simulation or computation, like, for example, running a climate model.

As it will be shown in the next section, any such
parametric object like $u(\ttp)$, $F(\ttp)$, or $f(\ttp)$ may be seen as an element
of a tensor product space \cite{litvinenko-spde-2013, wahnert-stochgalerkin-2014}.
This can be used to find very sparse approximations to those objects, and hence much cheaper ways to evaluate parametric quantities in the uncertainty quantification, like means, covariances, exceedance probabilities, etc.
For this purpose, the dependence of $F(\ttp)$ and $f(\ttp)$ on $\ttp$ has to be propagated to the solution or state vector
$u(\ttp)$, see e.g. \cite{matthies-galerkin-2005}.

The main theoretical result of this work is approximation of the response surface of a stochastic model in the tensor train (TT) data format.
In the next section we outline the general Galerkin, PCE and KLE discretization schemes for a random field.
The introduction to the TT methods, and the new block cross interpolation algorithm are presented in Section \ref{sec:TTlow}.
A bit more details how to apply the block cross algorithm to the response surface based on a multivariate polynomial (polynomial chaos expansion, PCE) approximation see in Section \ref{sec:tt-kle-pce}.
We start with the TT approximation of the multi-dimensional input coefficient $\kappa$.
After that, in Section \ref{sec:stiff_tt} we construct the stochastic Galerkin matrix in the TT format.
Section \ref{subsec:levelSets} is devoted to the efficient post-processing (computation of level sets, Sobol indices (Section \ref{sec:SA}), the mean value and covariance) in the TT format.
Numerical results in Section \ref{sec:numerics} demonstrate the performance of the TT format, applied to the approximate solution of the elliptic boundary problem with uncertain coefficients.
The notation list is given in Table \ref{tab:notations}.

\begin{table}[h]
\caption{Notation}
\begin{tabular}{|c|l|}
\hline \hline
\multicolumn{2}{|c|}{\textbf{General quantities}} \\ \hline
${\Nset}$        &Natural numbers \\ \hline
$\f$            &The right hand side \\ \hline
$\u$          & The solution \\ \hline
$D$ & Computational domain \\ \hline
$\mathcal{P}$  & Space where parameter $p$ is living \\ \hline
$\mathcal{V}_N$ & vector space spanned on the basis $\{\varphi_1(x),\ldots,\varphi_N(x)\}$ \\ \hline
$I$, $I_N$ & Identical operator, identity matrix of size $N\times N$ \\ \hline \hline
\multicolumn{2}{|c|}{\textbf{Stochastic quantities and expansions}} \\ \hline
{KL}           & {Karhunen-Lo\`eve (Expansion)}\\ \hline
gPCE            & generalized Polynomial Chaos Expansion \\ \hline
${\cov}(x,y)$          &Covariance function\\ \hline
${\var}$           & Variance\\ \hline
$\gamma(x,\omega)$ & Gaussian random field  \\ \hline
$\kappa(x,\omega)$ & Non-Gaussian random field, $\kappa(x,\cdot)=\phi(\gamma(x,\cdot))$ \\ \hline
$\phi$ & Transformation, $\kappa=\phi(\gamma)$ \\ \hline
$\kappa_{\alpha}(x)$ & PCE coefficient at the index $\alpha$ \\ \hline
$\bar{\kappa}(x)$ & The mean value  \\ \hline
$H_{\alpha}$ & Multivariate Hermite polynomial with multi-index $\alpha:=(\alpha_1,\ldots,\alpha_M)$ \\ \hline
$h_{\alpha_m}(\theta_m)$ & univariate polynomial of variable $\theta_m$ \\ \hline
$\BDelta_{\alpha,\beta,\nu}$  & Tensor product of the Hermite polynomial algebra $\Delta$-matrices   \\ \hline
$\Delta_{\alpha_1,\beta_1,\nu_1} $ & A single $\Delta$ matrix defined for single indices $\alpha_1,\beta_1,\nu_1$ \\ \hline
$K_0,\ldots,K_L$ & Galerkin matrices with coefficient $v_l(x)$, $l=0,\ldots,L$ \\ \hline \hline
\multicolumn{2}{|c|}{\textbf{Indices, ranges and sets}} \\ \hline
$\mathbf{i}$ & Multi-index $\mathbf{i}:=(i_1,\ldots,i_{d})$\\ \hline
$\alpha$, $\beta$, $\nu$ & stochastic multi-indices  \\ \hline
$\J$, $\J_{M,p}$ & Infinite and finite-dimensional multi-index sets \\ \hline
$\J_{M,p}^{sp}$ & Sparse multi-index set \\ \hline
$L$ & Number of the KLE terms after truncation \\ \hline
$M$ & Stochastic dimension \\ \hline
$d$ & Physical (spatial) dimension \\ \hline
$p$ & polynomial order or vector of them, $p=(p_1,\ldots,p_M)$ \\ \hline \hline
\multicolumn{2}{|c|}{\textbf{Multidimensional and/or tensor product quantities}} \\ \hline
{TT}           & Tensor train data format \\ \hline
$\otimes$  & Tensor (Kronecker) product \\ \hline
$\Join $ & strong Kronecker product \\ \hline
$u(\theta_1,\ldots,\theta_M)$ & $M$-dimensional function \\ \hline
$\mathbb{I}^{(k)}$ & left quasi-maxvol indices, $\{\alpha_1,\ldots,\alpha_{k}\}\in\mathbb{I}^{(k)}$ \\ \hline
$\mathbb{J}^{(k)}$ & right quasi-maxvol indices, $\{\alpha_{k+1},\ldots,\alpha_{M}\} \in \mathbb{J}^{(k)}$ \\ \hline
\end{tabular}
\label{tab:notations}
\end{table}%

\section{Discretisation and computation}
\label{SS:discret}
For brevity we follow \cite{Matthies_encycl}, where more references may be found,
see also the recent monograph \cite{LeMatre10}.
Usually \eqref{eq:general_model} is some partial differential equation and has to be discretised,
approximated, or somehow projected onto a finite dimensional subspace
\begin{equation}
\mathcal{V}_N = \mathrm{span}\{\varphi_1(x),\ldots,\varphi_N(x)\} \subset \mathcal{V}, \quad \mbox{with} \quad \dim \mathcal{V}_N = N.
\label{eq:V_N}
\end{equation}
For example, a set of piecewise linear finite elements may be considered.

To propagate the parametric dependence, choose a finite dimensional
subspace of the Hilbert space,
say $\mathcal{P}_J \subset \mathcal{P}$ for the solution $u(\ttp)$
in \eqref{eq:general_model}.
Via the Galerkin projection or collocation, or other such techniques,
the parametric model is thereby formulated on
the tensor product $\mathcal{V}_N \otimes \mathcal{P}_J$, denoted as
\begin{equation}  \label{eq:all_discrete}
\mathbf{F}(\u) = \f.
\end{equation}
For example, the linear elliptic equation \eqref{eq:elliptic} is cast to the linear system $\K \u = \f$.
There are multiple possibilities for the choice of $\mathcal{P}$, and hence finite dimensional subspaces $\mathcal{P}_J$.
The solution of \eqref{eq:all_discrete} is often computationally challenging,
as $\dim (\mathcal{V}_N \otimes \mathcal{P}_J) = N \cdot J$ may be very large.
One way to handle high-dimensional problems is to apply a low-rank
approximation, by representing
the entities in \eqref{eq:all_discrete}, e.g. $\mathbf{F}$, $\u$, and
$\f$ in a low-rank data format.  Several numerical techniques
\cite{Nouy2009, Doostan2009, Krosche_2010, matthieszander-lowrank-2012}
have been developed recently to obtain an approximation to the
solution of \eqref{eq:all_discrete} in the form $\u(x,\ttp) \approx \sum_j \mathitbf{u}_j(x) \cdot \mathitbf{z}_j(\ttp)$.
It is important that such techniques operate only with the elements of the data-sparse low-rank representation, and hence solve the high-dimensional problem efficiently \cite{KhorSurvey, Schwab_sparse_tensors11, ost-latensor-2009, ot-tt-2009, tobkress-param-2011, litvinenko-spde-2013, Sudret_sparsePCE}.

Once this has been computed, any other functional like $\varPsi(u(\ttp))$ may be found with relative ease.
As soon as $\mathcal{P}$ is equipped with a probability measure, the functionals usually take the form of expectations, a variance, an exceedance probability, or other statistic quantities needed in the uncertainty quantification.


\subsection{Discretization of the input random field}

We assume that $\kappa(x,\omega)$ may be seen as a smooth transformation $\kappa = \phi(\gamma)$ of the Gaussian random field $\gamma(x,\omega)$.
In this Section we explain how to compute KLE of $\gamma$ if KLE of $\kappa$ is given. For more details see \cite[Section 3.4.2]{ZanderDiss} or \cite{KeeseDiss}.

Typical example is the log-normal field with $\phi(\gamma) = \exp(\gamma)$.
The Gaussian distribution of $\gamma$ stipulates to decompose $\phi$ into the Hermite polynomial series,
\begin{equation}
\phi(\gamma) = \sum\limits_{i=0}^{\infty} \phi_i h_i(\gamma) \approx \sum\limits_{i=0}^{Q} \phi_i h_i(\gamma), \quad \phi_i = \int\limits_{-\infty}^{+\infty} \phi(z) \frac{1}{i!} h_i(z) \exp(-z^2/2) dz,
\label{eq:phi_tranform}
\end{equation}
where $h_i(z)$ is the $i$-th Hermite polynomial, and $Q$ is the number of terms after truncation.

The Gaussian field $\gamma(x,\omega)$ may be written as the Karhunen-Loeve expansion (KLE).
First, given the covariance matrix of $\kappa(x,\omega)$, we may relate it with the covariance matrix of $\gamma(x,\omega)$ as follows,
\begin{equation}
\cov_{\kappa}(x,y) = \int_{\Omega} \left(\kappa(x,\omega)-\bar{\kappa}(x)\right) \left(\kappa(y,\omega)-\bar{\kappa}(y)\right) dP(\omega) \approx \sum_{i=0}^{Q} i! \phi_i^2 \cov_{\gamma}^i (x,y).
\end{equation}
Solving this implicit $Q$-order equation, we derive $\cov_{\gamma} (x,y)$.
Now, the KLE may be computed,
\begin{equation}
\gamma(x,\omega) = \sum_{m=1}^{\infty} g_m(x) \theta_m(\omega), \quad \mbox{where} \quad \int\limits_{D} \cov_{\gamma}(x,y) g_m(y)dy = \lambda_m g_m(x),
\end{equation}
and we assume that the eigenfunctions $g_m$ absorb the square roots of the KL eigenvalues, such that the stochastic variables $\theta_m$ are normalized (they are uncorrelated and jointly Gaussian).

In the discrete setting, we truncate PCE and write it for $M$ random variables,
\begin{equation}
\kappa(x,\omega) \approx \sum\limits_{\alpha \in \J_M} \kappa_{\alpha}(x) H_{\alpha}(\theta(\omega)), \quad \text{where}\quad H_{\alpha}(\theta):=h_{\alpha_1}(\theta_1) \cdots h_{\alpha_M}(\theta_M)
\label{eq:pce_k}
\end{equation}
is the multivariate Hermite polynomial, $\alpha = (\alpha_1,\ldots,\alpha_M)$ is the multi-index of the PCE coefficients, $h_{\alpha_m}(\theta_m)$ is the univariate Hermite polynomial,
and $\J_M = \bigotimes_{m=1}^M \{0,1,2,\ldots\}$ is a set of all multinomial orders (or a multi-index set).
The Galerkin coefficients $\kappa_{\alpha}$ are evaluated as follows,
\begin{equation}
\kappa_{\alpha}(x) = \dfrac{(\alpha_1+\cdots+\alpha_M)!}{\alpha_1! \cdots \alpha_M! } \phi_{\alpha_1+\cdots+\alpha_M} \prod_{m=1}^{M} g_m^{\alpha_m}(x),
\label{eq:pce_k_coeffs}
\end{equation}
where $\phi_{|\alpha |}:= \phi_{\alpha_1+\cdots+\alpha_M}$ is the Galerkin coefficient of the transform function in \eqref{eq:phi_tranform},
and $g_m^{\alpha_m}(x)$ means just the $\alpha_m$-th power of the KLE function value $g_m(x)$.
The expansion \eqref{eq:pce_k} still contains infinite amount of terms.
In practice, we restrict the polynomial orders to finite limits,
which can be done in different ways.
In the current paper, we investigate the following two possibilities.

\begin{definition}
The \emph{full} multi-index is defined by restricting each component independently,
$$
\J_{M,p} = \{0,1,\ldots,p_1\} \otimes \cdots \otimes \{0,1,\ldots,p_M\}, \quad \mbox{where} \quad p=(p_1,\ldots,p_M)
$$
is a shortcut for the tuple of order limits.
\end{definition}

The full set provides high flexibility in resolution of stochastic variables
\cite{wahnert-stochgalerkin-2014, litvinenko-spde-2013}.
However, its cardinality is equal to $\prod_{m=1}^M (p_m+1) \le (p+1)^M$, if $p_m\le p$.
This may become a very large number even if $p$ and $M$ are moderate ($p \sim 3$, $M \sim 20$ is typical).
In this paper, we do not store all $(p+1)^M$ values explicitly, but instead approximate them via the \emph{tensor product} representation. See more about multi-indices in Sec. 3.2.1 in \cite{ZanderDiss}.

A traditional way to get rid of the curse of dimensionality is the sparse expansion set.
\begin{definition}
The \emph{sparse} multi-index is defined by restricting the sum of components,
$$
\J_{M,p}^{sp} = \left\{\alpha=(\alpha_1,\ldots,\alpha_M):\quad \alpha \ge 0,~~\alpha_1+\cdots+\alpha_M \le p \right\}.
$$
\end{definition}

The sparse set contains $\mathcal{O}\left(\frac{M!}{p!(M-p)!}\right) = \mathcal{O}(M^p)$ values if $M \gg p$,
which is definitely much less than $(p+1)^M$.
However, the negative side is that for a fixed $p$, some variables are worse resolved than the others, and the approximation accuracy may suffer.
It may be harmful to increase $p$ either, since in the sparse set it contributes exponentially to the complexity.

The tensor product storage of the full coefficient set scales as $\mathcal{O}(M p r^2)$, where $r$ is the specific measure of the ``data structure''.
Typically, it grows with $M$ and $p$, fortunately in a mild way (say, linearly).
In cases when $M$ is moderate, but $p$ is relatively large, this approach may become preferable.
Other important features are the adaptivity to the particular data, and easy assembly of the stiffness matrix of the PDE problem.

Some complexity reduction in Formula \eqref{eq:pce_k_coeffs} can be achieved with the help of the KLE for the initial field $\kappa(x,\omega)$.
Consider the expansion
\begin{equation}
\kappa(x,\omega) = \bar\kappa(x) + \sum_{\ell=1}^{\infty} \sqrt{\mu_\ell} v_\ell(x) \eta_\ell(\omega)
\approx  \bar\kappa(x) + \sum_{\ell=1}^{L} \sqrt{\mu_\ell} v_\ell(x) \eta_\ell(\omega),
\label{eq:k_kle}
\end{equation}
where $v_\ell(x)$ are eigenfunctions of the integral operator with the covariance as the kernel.
The set $\{v_\ell(x)\}$ can be used as a (reduced) basis in $L_2(D)$.
It is difficult to work with \eqref{eq:k_kle} straightforwardly, since the distribution of $\eta_\ell$ is generally unknown.
But we know that the set $V(x) = \{v_\ell(x)\}_{\ell=1}^{L}$, where $L$ is the number of KLE terms after the truncation, serves as an optimal reduced basis.
Therefore, instead of using \eqref{eq:pce_k_coeffs} directly, we project it onto $V(x)$:
\begin{equation}
\tilde \kappa_{\alpha}(\ell) = \dfrac{(\alpha_1+\cdots+\alpha_M)!}{\alpha_1! \cdots \alpha_M! } \phi_{\alpha_1+\cdots+\alpha_M} \int\limits_{D} \prod_{m=1}^{M} g_m^{\alpha_m}(x) v_\ell(x) dx.
\label{eq:k_kle_pce_coeff}
\end{equation}
Note that the range $\ell=1,\ldots,L$ may be much smaller than the number of $x$-discretization points $N$.
After that, we restore the approximate coefficients,
\begin{equation}
\kappa_{\alpha}(x) \approx \bar\kappa(x) + \sum\limits_{\ell=1}^{L} v_\ell(x) \tilde \kappa_{\alpha}(\ell).
\label{eq:k_kle_pce}
\end{equation}

\subsection{Construction of the stochastic Galerkin operator}
The same PCE ansatz of the coefficient \eqref{eq:pce_k} may be adopted to discretize the solution $u$, the test functions from the stochastic Galerkin method, and ultimately the whole initial problem \eqref{eq:general_model}, see \cite{wahnert-stochgalerkin-2014, litvinenko-spde-2013}.
To be concrete, we stick to the elliptic equation \eqref{eq:elliptic} with the deterministic right-hand side $f=f(x)$.

Given the KLE components \eqref{eq:k_kle} and the spatial discretization basis \eqref{eq:V_N},
we first assemble the spatial Galerkin matrices,
\begin{equation}
K_0(i,j) = \int\limits_{D} \bar\kappa(x) \nabla \varphi_i(x) \cdot \nabla \varphi_j(x) dx, \quad K_\ell(i,j) = \int\limits_{D} v_\ell(x) \nabla \varphi_i(x) \cdot \nabla \varphi_j(x) dx,
\label{eq:K_l}
\end{equation}
for $i,j=1,\ldots,N$, $\ell=1,\ldots,L$.
Now we take into account the PCE part $\tilde \kappa_{\alpha}$.
Assuming that $u$ is decomposed in the same way as \eqref{eq:pce_k} with the same $\J_{M,p}$ or $\J_{M,p}^{sp}$, we integrate over stochastic coordinates $\theta$ and obtain the following rule (see also \cite{KeeseDiss, ZanderDiss, Matthies_encycl}),
\begin{equation}
K_{\alpha,\beta}(\ell) = \int\limits_{\mathbb{R}^M} H_{\alpha}(\theta) H_{\beta}(\theta) \sum\limits_{\nu \in \J_{M,p}} \tilde\kappa_{\nu}(\ell) H_{\nu}(\theta) d\theta = \sum\limits_{\nu \in \J_{M,p}} \BDelta_{\alpha,\beta,\nu} \tilde\kappa_{\nu}(\ell),
\label{eq:K_p}
\end{equation}
where
\begin{equation}
\BDelta_{\alpha,\beta,\nu} =  \Delta_{\alpha_1,\beta_1,\nu_1} \cdots \Delta_{\alpha_M,\beta_M,\nu_M}, \quad  \Delta_{\alpha_m,\beta_m,\nu_m} =  \int\limits_{\mathbb{R}} h_{\alpha_m}(z) h_{\beta_m}(z) h_{\nu_m}(z) dz,
\label{eq:Delta}
\end{equation}
is the triple product of the Hermite polynomials, and $\tilde\kappa_{\nu}(\ell) $ is according to \eqref{eq:k_kle_pce_coeff}.
For convenience, since $\Delta$ is a super-symmetric tensor, we will denote as $\Delta_{\nu_m} \in\mathbb{R}^{(p+1) \times (p+1)}$ a matrix slice at the fixed index $\nu_m$ (or the whole multi-index $\nu$ in $\BDelta$).
Putting together \eqref{eq:k_kle_pce}, \eqref{eq:K_l} and \eqref{eq:K_p}, we obtain the whole discrete stochastic Galerkin operator,
\begin{equation}
\K = K_0 \otimes \BDelta_0 + \sum\limits_{\ell=1}^L K_\ell \otimes \sum\limits_{\nu \in \J_{M,p}} \BDelta_{\nu} \tilde \kappa_{\nu}(\ell),
\label{eq:K_gen}
\end{equation}
which is a square matrix of size $N \cdot \#\J_{M,p} = N(p+1)^M$ in case of full $\J_{M,p}$.
Fortunately, if $\tilde \kappa_{\nu}$ is computed in the tensor product format, the direct product in $\BDelta$ \eqref{eq:Delta} allows to exploit the same format for \eqref{eq:K_gen}, and build the operator easily.

Since the only stochastic input is the permeability $\kappa$, the right-hand side is extended to the size of $\K$ easily,
\begin{equation}
\f = f_0 \otimes e_{0}, \qquad f_0(i) = \int\limits_{D} \varphi_i(x) f(x)dx, \quad i=1,\ldots,N,
\label{eq:f}
\end{equation}
and $e_0$ is the first identity vector of size $\#\J_{M,p}$, $e_0 = (1,0,\ldots,0)^\top$, which assigns the deterministic $f(x)$ to the zeroth-order Hermite polynomial in parametric domain.

\section{Tensor product formats and low-rank data compression}
\label{sec:TTlow}
\subsection{Tensor Train decomposition}
We saw that the cardinality of the full polynomial set $\J_{M,p}$ may rise to prohibitively large values $(p+1)^M$, and the traditional way to get rid of this curse of dimensionality is the sparsification $\J_{M,p}^{sp}$.
We propose to use the full set, but approximate the data indirectly in the low-rank tensor product format,
which may be more flexible than the prescribed sparsification rule.

To show the techniques in the most brief way, we choose the so-called \emph{matrix product states} (MPS) formalism~\cite{fannes-mps-1992}, which introduces the following representation of a multi-variate tensor:
\begin{equation}\label{eq:tt}
 \begin{split}
 u(\alpha)  & =  \tau(u^{(1)},\ldots,u^{(M)}), \quad\mbox{meaning}\\
  u(\alpha_1,\ldots,\alpha_M)
   & = \sum_{s_1=1}^{r_1} \sum_{s_2=1}^{r_2}  \cdots \sum_{s_{M-1}=1}^{r_{M-1}}
      u^{(1)}_{s_1}(\alpha_1) u^{(2)}_{s_1,s_2}(\alpha_2) \cdots u^{(M)}_{s_{M-1}}(\alpha_M), \quad \mbox{or} \\
  u(\alpha_1,\ldots,\alpha_M) & = u^{(1)}(\alpha_1) u^{(2)}(\alpha_2) \cdots u^{(M)}(\alpha_M), \quad \mbox{or}\\
  u & = \sum_{s_1=1}^{r_1} \sum_{s_2=1}^{r_2}  \cdots \sum_{s_{M-1}=1}^{r_{M-1}}
      u^{(1)}_{s_1} \otimes u^{(2)}_{s_1,s_2} \otimes \cdots \otimes u^{(M)}_{s_{M-1}}.
 \end{split}
\end{equation}
In numerical linear algebra this format became known under the name~\emph{tensor train} (TT) representation since \cite{ot-tt-2009,osel-tt-2011}.
Each TT core (or \emph{block}) $u^{(k)}=[u^{(k)}_{s_{k-1},s_k}(\alpha_k)]$ is defined by $r_{k-1}(p_k+1) r_k$ numbers, where $p_k$ denotes the number of basis functions (i.e. the polynomial order) in the variable $\alpha_k$ (the \emph{mode size}), and $r_k=r_k(u)$ is the \emph{TT rank}.
The total number of entries scales as $\mathcal{O}(Mpr^2)$, which is tractable as long as $r=\max\{r_k\}$ is moderate.

Each line of the definition \eqref{eq:tt} is convenient for its own purposes.
The first one is a formal statement that a tensor $u$ is presented in the TT format, which may be used if we change one block, for example.
The second line is the most expanded elementwise definition, showing the polylinearity of the format.
The third line is the seminal Matrix Product form: after fixing $\alpha_k$, each block $u^{(k)}(\alpha_k)$ has only two indices $s_{k-1},s_k$, and may be considered as a matrix.
Thus, the TT format means that each element of $u$ is presented as a product of matrices.
Finally, the fourth line is an analog of the \emph{Canonical Polyadic} decomposition, convenient to compare the structures in both formats.
Surely, in the same form we may write e.g. $\kappa_{\alpha} = \kappa^{(1)}(\alpha_1) \cdots \kappa^{(M)}(\alpha_M)$.

\subsection{Analytical construction of the TT format}

High-dimensional operators (cf. $\K$ in \eqref{eq:K_gen}) may be naturally presented in the TT format, by putting two indices in each TT block,  $\K^{(k)} = [\K^{(k)}_{s_{k-1},s_k}(\alpha_k,\beta_k)]$.
\begin{example}
Consider the $M$-dimensional Laplacian discretized over an uniform tensor grid ($n$ dofs in each direction).
It has the Kronecker (canonical) rank-$M$ representation:
\begin{equation}
\mathbf{A} = A\otimes I \otimes \cdots \otimes I + I \otimes A \otimes  \cdots \otimes I  + \cdots + I\otimes I \otimes  \cdots \otimes A \in \mathbb{R}^{n^M \times n^M}
\end{equation}
with $A= \mbox{tridiag}\{-1,2,-1\}\in \mathbb{R}^{n \times n}$, and $I$ being the $n \times n$ identity.
However, the same operator in the TT format is explicitly representable with all TT ranks equal to 2 for any dimension \cite{khkaz-lap-2012},
\begin{equation}
\mathbf{A}
 = (A \; I)\Join
 \begin{pmatrix}
  I & 0 \\
  A & I
 \end{pmatrix}\Join
...
 \Join
 \begin{pmatrix}
  I & 0 \\
  A & I
 \end{pmatrix}\Join
 \begin{pmatrix}
  I  \\
  A
 \end{pmatrix},
 \end{equation}
where the strong Kronecker product operation "$\Join $" is defined as a regular matrix product for the first level of TT cores, and the inner blocks (e.g. $A$, $I$) are multiplied by means of the Kronecker product.
In the elementwise matrix product notation, the Laplace operator reads
$$
\mathbf{A}(\mathbf{i},\mathbf{j}) =
\begin{pmatrix}A(i_1,j_1) & I(i_1,j_1)\end{pmatrix}
\begin{pmatrix}
  I(i_2,j_2) & 0 \\
  A(i_2,j_2) & I(i_2,j_2)
 \end{pmatrix} \cdots
 \begin{pmatrix}
  I(i_d,j_d)  \\
  A(i_d,j_d)
 \end{pmatrix}.
$$
\end{example}

\subsection{Approximate construction of the TT format}

The principal favor of the TT format comparing to e.g. the Canonical Polyadic (CP) decomposition (which is also popular in statistics, see \cite{litvinenko-spde-2013, wahnert-stochgalerkin-2014}) is a stable quasi-optimal rank reduction procedure \cite{osel-tt-2011}, based on singular value decompositions.
The complexity scales as $\mathcal{O}(Mpr^3)$, i.e. is free from the curse of dimensionality,
while the full accuracy control takes place.

But before we outline this procedure, we need to introduce some notation.
In the TT definition \eqref{eq:tt}, we see that TT blocks depend on 3 indices, and in this sense are 3-dimensional tensors.
However, when we write computational tensor algorithms, it is convenient (and efficient in practice!) to cast tensor contractions to (sequences of) matrix products.
One way to deduce a matrix from a 3-dimensional block is to take a slice, fixing one index, which was used in the Matrix Product form of \eqref{eq:tt}.
Another approach is the conception of \emph{reshaping}, i.e. when two of three indices are considered as the new multiindex, such that all block elements become written in a matrix.
In the programming practice, this operation may be explicit, like the \texttt{reshape} function in MATLAB, or implicit, like in a Fortran+LAPACK program, where all data is passed simply in a contiguous array, and the sizes are specified as separate inputs.

In mathematics, this issue may be formalized via the following special symbols.
\begin{definition}\label{def:folded}
Given a TT block $u^{(k)}$, introduce the following reshapes:
\begin{itemize}
 \item \emph{left-folded block}: $u^{|k\rangle}(\overline{s_{k-1},\alpha_k},s_k) = u^{(k)}_{s_{k-1},s_k}(\alpha_k), \quad u^{|k\rangle} \in \mathbb{R}^{r_{k-1}(p_k+1) \times r_k},$ \quad and
 \item \emph{right-folded block}: $u^{\langle k |}(s_{k-1},\overline{\alpha_k,s_k}) = u^{(k)}_{s_{k-1},s_k}(\alpha_k), \quad u^{\langle k |} \in \mathbb{R}^{r_{k-1}\times (p_k+1) r_k},$
\end{itemize}
both pointing to the same data stored in $u^{(k)}$.
\end{definition}

The latter remark about multiple pointers to the same data is a well-known conception in programming,
and will help us to  present algorithms in a more elegant way, by assigning e.g.
$u^{|k\rangle}=Q$, without a need to state explicitly $u^{(k)}_{s_{k-1},s_k}(\alpha_k) = u^{|k\rangle}(\overline{s_{k-1},\alpha_k},s_k)$ afterwards.

\begin{definition}\label{def:orth}
A TT block $u^{(k)}$ is said to be \emph{left-} resp. \emph{right-orthogonal}, if
$$
\left(u^{|k\rangle}\right)^* u^{|k\rangle} = I, \quad \mbox{or} \quad u^{\langle k|} \left(u^{\langle k|}\right)^* = I,
$$
respectively.
\end{definition}

Note that for the first and the last blocks, the left and right orthogonalities mean simply the orthogonality w.r.t. the tensor indices $\alpha_1$ and $\alpha_M$, in the same way as in the dyadic decomposition of a matrix.
Since the TT representation is not unique,
$$
u(\alpha) = \left(u^{(1)}(\alpha_1) R_1\right) \left(R_1^{-1} u^{(2)}(\alpha_2) R_2\right) R_2^{-1} \cdots R_{M-1} \left(R_{M-1}^{-1} u^{(M)}(\alpha_M)\right)
$$
for any nonsingular $R_k$ of proper sizes, the orthogonalities may be ensured without changing the whole tensor.
Indeed, neighboring TT blocks $u^{|k\rangle} u^{\langle k+1 |}$ constitute a dyadic decomposition,
which may be orthogonalized either as $q^{|k\rangle} \left(R u^{\langle k+1 |}\right)$, or $\left(u^{|k\rangle} L\right) q^{\langle k+1 |}$,
where $q^{|k\rangle}R = u^{|k\rangle}$ or $Lq^{\langle k+1 |} = u^{\langle k+1 |}$, respectively.
Repeating this procedure for all blocks yields a TT representation with the corresponding orthogonality.
Algorithm \ref{alg:TT_ort_l} explains the left orthogonalization, the right one is analogous.

\begin{algorithm}[t]
 \caption{Left TT orthogonalization} \label{alg:TT_ort_l}
 \begin{algorithmic}[1]
  \REQUIRE A tensor $u$ in the TT format.
  \ENSURE  A tensor $u$ with all TT blocks except $u^{(M)}$ left-orthogonal.
  \FOR{$k=1,\ldots,M-1$}
    \STATE Find QR decomposition $u^{|k\rangle} = q^{|k\rangle} R$, $\left(q^{|k\rangle}\right)^* q^{|k\rangle } =I$.
    \STATE Replace $u^{\langle k+1 |} = R u^{\langle k+1 |}$, \quad  $u^{|k\rangle}=q^{|k\rangle}$.
  \ENDFOR
 \end{algorithmic}
\end{algorithm}

\begin{algorithm}[t]
 \caption{TT rounding (right-to-left)} \label{alg:TT_svd_r}
 \begin{algorithmic}[1]
  \REQUIRE A tensor $u$ in the TT format, accuracies $\eps_k$, $k=1,\ldots,M-1$.
  \ENSURE  A tensor $v: \|u-v\|^2 \le \|u\|^2 \sum \eps_k^2$ with optimal TT ranks.
  \STATE Apply left orthogonalization Algorithm \ref{alg:TT_ort_l} to $u$.
  \FOR{$k=M,M-1,\ldots,2$}
    \STATE Compute SVD $u^{\langle k |} = W \diag(\sigma) V$.
    \STATE Determine minimal rank $r_{k-1}:~\sum_{s > r_{k-1}} \sigma_{s}^2 \le \eps_{k-1}^2 \|\sigma\|^2$.
    \STATE Take $r_{k-1}$ components $\tilde W_{s}=W_{s}$, $\tilde V_{s} = V_{s}$, $\tilde \sigma_{s} = \sigma_{s}$, $s = 1,\ldots,r_{k-1}$.
    \STATE Replace $u^{| k-1 \rangle} = u^{| k-1 \rangle} \cdot \tilde W \diag(\tilde \sigma)$, \quad  $v^{\langle k |}=\tilde V$.
  \ENDFOR
 \STATE $v^{|1\rangle} = u^{|1\rangle}$.
 \end{algorithmic}
\end{algorithm}

Despite the moderate complexity $\mathcal{O}(Mpr^3)$, these algorithms provide the orthogonality for large tensor train chunks, e.g. the TT interfaces.
\begin{definition}\label{def:interface}
The \emph{left-}, resp. \emph{right TT interfaces} are defined as follows:
\begin{equation*}
 \begin{split}
  U^{(1:k)}_{s_{k}}(\alpha_1,\ldots,\alpha_k) & = \sum\limits_{s_1=1}^{r_1} \cdots \sum\limits_{s_{k-1}}^{r_{k-1}} u^{(1)}_{s_1}(\alpha_1) \cdots u^{(k)}_{s_{k-1},s_k}(\alpha_k), \\
  U^{(k+1:d)}_{s_{k}}(\alpha_{k+1},\ldots,\alpha_M) & = \sum\limits_{s_{k+1}=1}^{r_{k+1}} \cdots \sum\limits_{s_{M-1}}^{r_{M-1}} u^{(k+1)}_{s_k,s_{k+1}}(\alpha_{k+1}) \cdots u^{(M)}_{s_{M-1}}(\alpha_M).
 \end{split}
\end{equation*}
\end{definition}

\begin{lemma}\label{lem:leftorth}
For a TT tensor with blocks $1,\ldots,M-1$ left-orthogonal, it holds
$$
\left(U^{(1:q)}\right)^* U^{(1:q)} = I, \quad q=1,\ldots,M-1.
$$
\end{lemma}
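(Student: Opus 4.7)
\medskip

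\noindent\textbf{Proof plan.} The natural approach is induction on $q$, with the inductive step amounting to unrolling one TT block at a time and invoking Definition~\ref{def:orth}. The key observation is the recursion
\[
U^{(1:q)}_{s_q}(\alpha_1,\ldots,\alpha_q) = \sum_{s_{q-1}=1}^{r_{q-1}} U^{(1:q-1)}_{s_{q-1}}(\alpha_1,\ldots,\alpha_{q-1})\, u^{(q)}_{s_{q-1},s_q}(\alpha_q),
\]
which follows directly from Definition~\ref{def:interface}, and the identification of the matrix built by grouping the multi-index $(\alpha_1,\ldots,\alpha_{q-1},s_{q-1})$ on one side with the left-folded block $u^{|q\rangle}$ on the second factor.

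\medskip

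\noindent\textbf{Base case} $q=1$: here $r_0=1$ and the interface coincides with $u^{(1)}$. Its left-folded reshape $u^{|1\rangle}$ is a matrix with rows indexed by $\alpha_1$ and columns by $s_1$, so left-orthogonality of $u^{(1)}$ (Definition~\ref{def:orth}) is exactly $(U^{(1:1)})^*U^{(1:1)} = I$.

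\medskip

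\noindent\textbf{Inductive step.} Assume $(U^{(1:q-1)})^*U^{(1:q-1)} = I$ and compute
\[
\bigl[(U^{(1:q)})^*U^{(1:q)}\bigr]_{s_q,s_q'} = \sum_{\alpha_1,\ldots,\alpha_q} \overline{U^{(1:q)}_{s_q}(\alpha_1,\ldots,\alpha_q)}\, U^{(1:q)}_{s_q'}(\alpha_1,\ldots,\alpha_q).
\]
Substituting the recursion and interchanging the summations, the inner sum over $(\alpha_1,\ldots,\alpha_{q-1})$ produces $\delta_{s_{q-1},s_{q-1}'}$ by the inductive hypothesis, collapsing the expression to
\[
\sum_{\alpha_q}\sum_{s_{q-1}} \overline{u^{(q)}_{s_{q-1},s_q}(\alpha_q)}\, u^{(q)}_{s_{q-1},s_q'}(\alpha_q) = \bigl[(u^{|q\rangle})^* u^{|q\rangle}\bigr]_{s_q,s_q'} = \delta_{s_q,s_q'},
\]
where the last equality is the left-orthogonality of $u^{(q)}$, available since $q \le M-1$. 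This closes the induction.

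\medskip

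\noindent\textbf{Expected difficulty.} There is no real obstacle; the argument is essentially bookkeeping. The only thing requiring care is the reshape: one must recognize that for the Gram computation the multi-index $(\alpha_1,\ldots,\alpha_{q-1},s_{q-1})$ plays the role of the row index of $u^{|q\rangle}$, so that the condition $(u^{|q\rangle})^*u^{|q\rangle}=I$ from Definition~\ref{def:orth} applies after the inductive hypothesis has eliminated the dependence on $(\alpha_1,\ldots,\alpha_{q-1})$. Everything else is index manipulation.
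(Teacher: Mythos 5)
Your proof is correct and follows essentially the same route as the paper's: the paper contracts the Gram product index by index, showing that each partial contraction yields the identity via the left-orthogonality $(u^{|k\rangle})^* u^{|k\rangle}=I$, which is exactly your induction on $q$ with the interface recursion. No issues.
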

\begin{proof}
The product
$$
\left(\left(U^{(1:q)}\right)^* U^{(1:q)}\right)_{s_{q},s_{q}'} = \sum\limits_{\alpha_1,\ldots,\alpha_{q}} \bar U^{(1:q)}_{s_{q}}(\alpha_1,\ldots,\alpha_{q}) \cdot U^{(1:q)}_{s_{q}'}(\alpha_1,\ldots,\alpha_{q})
$$
where $\bar U$ is the complex conjugation,
may be computed by the direct summation over indices $\alpha_1,\ldots,\alpha_q$.
In the first step, we evaluate $\sum_{\alpha_1} \bar u^{(1)}_{s_1}(\alpha_1) u^{(1)}_{s_1'}(\alpha_1) = I_{s_1,s_1'}$, according to Def. \ref{def:orth}.
Let us be given $I_{s_{k-1},s_{k-1}'}$, then in the $k$-th step we compute
$$
\sum_{s_{k-1},s_{k-1}',\alpha_k} I_{s_{k-1},s_{k-1}'} \bar u^{(k)}_{s_{k-1},s_k}(\alpha_k) u^{(k)}_{s_{k-1}',s_k'}(\alpha_k) = \left(u^{|k\rangle}_{s_k}\right)^* u^{|k\rangle}_{s_k'} = I_{s_k,s_k'},
$$
that is, the induction may be continued, and finally we end up with $I_{s_q,s_q'}$.
\end{proof}

Now, it is clear how to build the TT rounding procedure: in the first step, we run Alg. \ref{alg:TT_ort_l}, making the TT format left-orthogonal, and so will be $U^{(1:M-1)}$.
Then we perform a small-sized SVD $\left(R u^{\langle M|}\right) \approx W \Sigma V$ and cast $W\Sigma$ to the block $u^{(M-1)}$.
Setting $v^{\langle M|}=V$ ensures its right orthogonality, while the left orthogonality of $U^{(1:M-2)}$ also takes place.
So we may perform the SVD of $x^{\langle d-1 |}$ and so on.
The whole procedure is summarized in Algorithm \ref{alg:TT_svd_r}.
In practice, we do not typically use different thresholds $\eps_k$ for each step,
but instead fix the global level $\eps$, and put $\eps_k=\eps/\sqrt{d-1}$ everywhere.

\subsection{Sum and product in the TT format}

Equipped with the robust reduction technique, we may think of the \emph{tensor train arithmetics},
because algebraic operations are inherited naturally from low-rank or CP decompositions.
For example, a {\bf{sum}} $w=u+v$ for $u$ and $v$ given in the form \eqref{eq:tt} casts to the TT blocks of $w$ as follows,
\begin{equation}
 w^{(M)}(\alpha_M) = \begin{bmatrix}u^{(M)}(\alpha_M) \\ v^{(M)}(\alpha_M)\end{bmatrix}, \quad w^{(k)}(\alpha_k) = \begin{bmatrix}u^{(k)}(\alpha_k) \\ & v^{(k)}(\alpha_k)\end{bmatrix},~ k=2,\ldots,M-1,
\label{eq:tt_add}
\end{equation}
and $w^{(1)}(\alpha_1) = \begin{bmatrix}u^{(1)}(\alpha_1) & v^{(1)}(\alpha_1)\end{bmatrix}$.
Matrix, pointwise and scalar products follow similarly, see below and \cite{osel-tt-2011} for more details.
The main feature of all such operations in the format is that the (TT) ranks may grow; fortunately, in many applications the rounding procedure allows to reduce them to a moderate level.

The {\bf{scalar}} (dot, inner) product $S=\dotprod{u}{v}$ of two vectors is equal to a product of all TT elements of both vectors, followed by a summation over all rank and initial indices.
However, this implementation would require $\mathcal{O}(Mpr^2(u)r^2(v)) = \mathcal{O}(Mpr^4)$ complexity.
Using the block foldings (Def. \ref{def:folded}) and auxiliary quantities, the scalar product may be computed with $\mathcal{O}(Mp(r^2(u)r(v)+r(u)r^2(v)) = \mathcal{O}(Mpr^3)$ cost, as shown in Algorithm \ref{alg:TT_dot}.

Note that Algorithm \ref{alg:TT_dot} remains consistent if the left TT ranks $r_0(u), r_0(v)$ are not equal to ones.
In this case, we simply return a matrix $S=S_1$, containing the elements $\dotprod{u_{s_0}}{v_{s_0'}}$.
It is especially convenient when we work with response surfaces for sPDEs, where $r_0$ may stand for the spatial grid size $N$.
An example is the computation of the variance in Section \ref{subsec:levelSets}.

\begin{algorithm}[t]
 \caption{Scalar product in the TT format} \label{alg:TT_dot}
 \begin{algorithmic}[1]
  \REQUIRE Tensors $u$, $v$ in the TT format.
  \ENSURE  Scalar product $S=\dotprod{u}{v}$.
  \STATE Initialize $S_{M} = v^{\langle M |} \left(u^{\langle M |}\right)^*  \in \mathbb{R}^{r_{d-1}(v) \times r_{d-1}(u)}$.
  \FOR{$k=M-1,\ldots,1$}
    \STATE $w^{|k\rangle}=v^{|k\rangle}S_{k+1} \in\mathbb{R}^{r_{k-1}(v) (p_k+1) \times r_k(u)}$.
    \STATE $S_k =  w^{\langle k |} \left(u^{\langle k |}\right)^* \in\mathbb{R}^{r_{k-1}(v) \times r_{k-1}(u)}$.
  \ENDFOR
  \RETURN $S=S_1 \in\mathbb{R}^{r_0(v) \times r_0(u)} = \mathbb{R}$.
 \end{algorithmic}
\end{algorithm}

\subsection{Block TT-Cross interpolation algorithm}
Calculation of the PCE formula \eqref{eq:k_kle_pce_coeff} could be a difficult task in tensor formats,
since the tensor indices enter the factorial functions, or even enumerate the elements of another vector, $\phi_{\alpha_1+\cdots+\alpha_M}$ in our case.
To compute the response surface in the TT format by this way, we need to be aimed with the following technique:
\begin{itemize}
 \item given a procedure to compute each element of a tensor, e.g. $\tilde\kappa_{\alpha_1,\ldots,\alpha_M}$ \eqref{eq:k_kle_pce_coeff}.
 \item build a TT approximation $\tilde\kappa_{\alpha} \approx \kappa^{(1)}(\alpha_1) \cdots \kappa^{(M)}(\alpha_M)$ using a feasible amount of elements (i.e. much less than $(p+1)^M$).
\end{itemize}
Such a tool exists, and relies on the \emph{cross interpolation} of matrices, generalized to the higher-dimensional case \cite{ot-ttcross-2010,so-dmrgi-2011proc,sav-qott-2014}.
The principal ingredient is based on the efficiency of an incomplete Gaussian elimination in approximation of a low-rank matrix, also known as the Adaptive Cross Approximation (ACA) \cite{bebe-2000,bebe-aca-2011}.
Given a matrix $U=[U(i,j)]\in\mathbb{R}^{n \times m}$, we select some few amount of ``good'' columns and rows to define the whole matrix,
\begin{equation}
U(i,j) \approx \tilde U(i,j)= \sum\limits_{s,s'=1}^r U(i,\mathbb{J}_{s}) M_{s,s'} U(\mathbb{I}_{s'},j),
\label{eq:2dcross}
\end{equation}
where $M = \left(U(\mathbb{I}, \mathbb{J})\right)^{-1}$, and $\mathbb{I} \subset \{1,\ldots,n\}$, $\mathbb{J} \subset \{1,\ldots,m\}$ are sets of indices of cardinality $r$.
It is known that there exists a quasi-optimal set of interpolating indices $\mathbb{I},\mathbb{J}$.
\begin{lemma}[Maximum volume (\emph{maxvol}) principle \cite{gt-maxvol-2001}]
 If $\mathbb{I}$ and $\mathbb{J}$ are such that $\mathrm{det} U(\mathbb{I}, \mathbb{J})$ is maximal among all $r \times r$ submatrices of $U$, then
 $$
 \|U-\tilde U\|_C \le (r+1) \min_{\mathrm{rank}(V) = r} \|U-V\|_2,
 $$
where $\| \cdot \|_C$ is the Chebyshev norm, $\|X\|_C = \max_{i,j}|X_{i,j}|$.
\label{lem:maxvol}
\end{lemma}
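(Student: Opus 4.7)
The plan is to reduce the pointwise error $U(i,j)-\tilde U(i,j)$ to a ratio of two determinants and then to control the numerator by combining the maxvol hypothesis with a singular-value estimate from Cauchy interlacing.

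First I would fix an arbitrary pair $(i,j)$ and form the bordered $(r+1)\times(r+1)$ submatrix
$$
W_{ij}=\begin{pmatrix} U(\mathbb{I},\mathbb{J}) & U(\mathbb{I},j)\\ U(i,\mathbb{J}) & U(i,j)\end{pmatrix}.
$$
Since maxvol forces $\det U(\mathbb{I},\mathbb{J})\neq 0$ (unless $U$ has rank less than $r$, in which case the statement is vacuous), the Schur complement formula for $\det W_{ij}$, combined with the definition \eqref{eq:2dcross} $\tilde U(i,j)=U(i,\mathbb{J})\,U(\mathbb{I},\mathbb{J})^{-1}\,U(\mathbb{I},j)$, yields the key identity
$$
U(i,j)-\tilde U(i,j)=\frac{\det W_{ij}}{\det U(\mathbb{I},\mathbb{J})}.
$$
(The cases $i\in\mathbb{I}$ or $j\in\mathbb{J}$ make both sides vanish, so I would dismiss them immediately.)

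Next I would split $|\det W_{ij}|$ into two factors via the adjugate identity $\mathrm{adj}(W_{ij})=\det(W_{ij})\,W_{ij}^{-1}$, which gives
$$
|\det W_{ij}|=\sigma_{\min}(W_{ij})\cdot\|\mathrm{adj}(W_{ij})\|_2.
$$
For the first factor, Cauchy interlacing for singular values of submatrices gives $\sigma_{\min}(W_{ij})=\sigma_{r+1}(W_{ij})\le\sigma_{r+1}(U)$, and the Eckart--Young theorem identifies $\sigma_{r+1}(U)=\min_{\mathrm{rank}(V)=r}\|U-V\|_2$. For the second factor, every entry of $\mathrm{adj}(W_{ij})$ is, up to a sign, the determinant of an $r\times r$ submatrix of $W_{ij}$, which is itself an $r\times r$ submatrix of $U$; the maxvol hypothesis therefore bounds each entry in absolute value by $|\det U(\mathbb{I},\mathbb{J})|$, and the trivial Frobenius estimate $\|A\|_2\le\|A\|_F$ applied to an $(r+1)\times(r+1)$ matrix yields
$$
\|\mathrm{adj}(W_{ij})\|_2\le(r+1)\,|\det U(\mathbb{I},\mathbb{J})|.
$$

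Assembling the three bounds, I would conclude uniformly in $(i,j)$:
$$
|U(i,j)-\tilde U(i,j)|=\frac{|\det W_{ij}|}{|\det U(\mathbb{I},\mathbb{J})|}\le\frac{\sigma_{r+1}(U)\cdot(r+1)|\det U(\mathbb{I},\mathbb{J})|}{|\det U(\mathbb{I},\mathbb{J})|}=(r+1)\,\min_{\mathrm{rank}(V)=r}\|U-V\|_2,
$$
which is the claimed Chebyshev-norm estimate. The conceptually delicate step is the adjugate bound: the gain from the maxvol hypothesis only materializes once one recognizes that the $r\times r$ cofactors of $W_{ij}$ are still genuine $r\times r$ submatrices of $U$, so the extremality of $|\det U(\mathbb{I},\mathbb{J})|$ applies to all of them simultaneously rather than only to the fixed block. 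Everything else is straightforward linear algebra.
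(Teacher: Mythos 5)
The paper does not prove Lemma~\ref{lem:maxvol} at all: it is imported verbatim from Goreinov and Tyrtyshnikov \cite{gt-maxvol-2001} with a citation and no argument, so there is nothing in the text to compare your proof against line by line. That said, your proof is correct and is essentially the classical argument from the cited source: the Schur-complement identity $U(i,j)-\tilde U(i,j)=\det W_{ij}/\det U(\mathbb{I},\mathbb{J})$, the interlacing bound $\sigma_{\min}(W_{ij})=\sigma_{r+1}(W_{ij})\le\sigma_{r+1}(U)$ combined with Eckart--Young, and the observation that every cofactor of $W_{ij}$ is an $r\times r$ submatrix of $U$ and hence dominated by the maximal volume (your adjugate formulation $\|\mathrm{adj}(W_{ij})\|_2\le(r+1)\max_{k,l}|\mathrm{adj}(W_{ij})_{k,l}|$ is just the usual estimate $\|W_{ij}^{-1}\|_2\le(r+1)\|W_{ij}^{-1}\|_C$ in different clothing). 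You also handle the degenerate cases ($i\in\mathbb{I}$, $j\in\mathbb{J}$, singular $W_{ij}$, $\mathrm{rank}(U)<r$) appropriately, so the argument is complete as written.
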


In practice, however, the computation of the true maxvol submatrix is infeasible, since it is a NP-hard problem.
Instead, one performs a heuristic iteration in an alternating fashion \cite{gostz-maxvol-2010}:
we start with some (e.g. random) low-rank factor $U^{(1)} \in \mathbb{R}^{n \times r}$, determine indices $\mathbb{I}$ yielding a quasi-maxvol $r \times r$ submatrix in $U^{(1)}$, and compute $U^{(2)}$ as $r$ columns of $U$ of the indices $\mathbb{I}$.
Vice versa, in the next step we find quasi-maxvol column indices in $U^{(2)}$ and calculate corresponding $nr$ elements, collecting them into the newer $U^{(1)}$, which hopefully approximates the true low-rank factor better than the initial guess.
This process continues until the convergence, which appears to be quite satisfactory in practice.

In higher dimensions we proceed similarly, thanks to the polylinear structure of the TT format,
which constitutes a recurrent matrix low-rank factorization.
Indeed, if we encapsulate together the first $k$ and last $M-k$ indices into two multiindices, the TT format \eqref{eq:tt} may be seen as the dyadic factorization with the interfaces (Def. \ref{def:interface}) playing roles of factors:
$$
u(\alpha_1,\ldots,\alpha_k;~\alpha_{k+1},\ldots,\alpha_M) = \sum\limits_{s_k=1}^{r_k} U^{(1:k)}_{s_{k}}(\alpha_1,\ldots,\alpha_k) U^{(k+1:M)}_{s_k} (\alpha_{k+1},\ldots,\alpha_M).
$$
Starting from $k=1$, we iterate the alternating cross interpolation, computing the maxvol indices in each TT block subsequently, and an update of the $k$-th block is defined by $r_{k-1} (p_k+1) r_k$ elements of the initial tensor.
Similarly to \eqref{eq:2dcross}, we compute the \emph{left} quasi-maxvol indices $\mathbb{I}^{(k)}$ from the tall matrix $U^{(1:k)}$, and the \emph{right} quasi-maxvol indices $\mathbb{J}^{(k)}$ from the wide matrix $U^{(k+1:d)}$.
Then the TT analog of \eqref{eq:2dcross} writes
\begin{equation}
u^{(k)}_{s_{k-1},s_k}(\alpha_k) = \sum_{s_{k-1}', s_k'} M^{(k-1)}_{s_{k-1},s_{k-1}'} u\left(\mathbb{I}_{s_{k-1}'}^{(k-1)}, \alpha_k, \mathbb{J}^{(k)}_{s_k'}\right) M^{(k)}_{s_{k}',s_{k}},
\label{eq:ttcross}
\end{equation}
where $M^{(k)} = \left(u\left(\mathbb{I}^{(k)}, \mathbb{J}^{(k)}\right)\right)^{-1}$, and the middle term is a sample of the sought tensor,
$u\left(\mathbb{I}_{s_{k-1}'}^{(k-1)}, \alpha_k, \mathbb{J}^{(k)}_{s_k'}\right) = u(\hat \alpha_1,\ldots,\hat\alpha_{k-1},\alpha_k,\hat\alpha_{k+1},\ldots,\hat\alpha_M)$.
This sampling runs through the whole range of $\alpha_k$ and only those of the rest indices that belong to $\mathbb{I}^{(k-1)}$ and $\mathbb{J}^{(k)}$,
i.e. $\{\hat\alpha_1,\ldots,\hat\alpha_{k-1}\}\in\mathbb{I}^{(k-1)}$ and $\{\hat\alpha_{k+1},\ldots,\hat\alpha_{M}\} \in \mathbb{J}^{(k)}$.
After that, the new $u^{(k)}$ is used in the maxvol algorithm for derivation of $\mathbb{I}^{(k)}$ as follows.
Let us be given a set $\mathbb{I}^{(k-1)}$ of $r_{k-1}$ index vectors of length $k-1$ each.
We concatenate it with all possible values of $\alpha_k$, and obtain the set $\left\{\mathbb{I}^{(k-1)}, \alpha_k\right\}$ of $r_{k-1}(p_k+1)$ vectors of length $k$ each.
The range $r_{k-1}(p_k+1)$ is exactly the row size of $u^{|k\rangle}$, and hence the maxvol will return $r_k$ values in this range.
Let us call them $\hat i_k$: the new left set $\mathbb{I}^{(k)}$ is taken as a set of tuples from $\left\{\mathbb{I}^{(k-1)}, \alpha_k\right\}$ with the labels $\hat i_k$ (among all $r_{k-1}(p_k+1)$ variants).
In the same way, the right indices $\mathbb{J}^{(k)}$ may be built in a recurrent fashion.

In total, we need only $\mathcal{O}(n_{it} M p r^2)$ entries to be evaluated, where $n_{it}$ is the number of alternating iterations (see Alg. \ref{alg:cross}), typically of the order of $10$.
In each step, given $r_{k-1}$ values $\{\hat\alpha_1,\ldots,\hat\alpha_{k-1}\}\in\mathbb{I}^{(k-1)}$, $p_k+1$ values of $\alpha_k$ and $r_k$ values $\{\hat\alpha_{k+1},\ldots,\hat\alpha_{M}\} \in \mathbb{J}^{(k)}$, it is affordable to call e.g. Formula \eqref{eq:k_kle_pce_coeff} $r_{k-1}(p_k+1)r_k$ times.

However, note that each call of \eqref{eq:k_kle_pce_coeff} throws $L$ values, corresponding to different $\ell=1,\ldots,L$.
We may account for this in various ways.
Since $\ell$ has the meaning of the reduced spatial variable, we may feature it as an additional dimension.
Nonetheless, when we will restrict the indices $\left\{\mathbb{I}^{(k-1)}, \alpha_k\right\}(\hat i_k) = \mathbb{I}^{(k)}$, we will remove some values of $\ell$ from consideration.
Therefore, a vast majority of information goes in vain: we evaluate $L$ values, but only a few of them will be
used to improve the approximation.
Another way is to run $L$ independent cross algorithms to approximate each $\kappa_{\alpha}(\ell)$ into its own TT format.
This is also not very desirable: we will have to add $L$ TT formats together, summing their ranks, which is to be followed by the TT approximation procedure \ref{alg:TT_svd_r}.
The asymptotic complexity thus scales as $\mathcal{O}(L^3)$, which was found too expensive in practice.

A better approach is to store all $\kappa_{\alpha}(\ell)$ in the same TT representation, employing the idea of the \emph{block} TT format \cite{dkos-eigb-2014}.
The resulting method has a threefold advantage: all data in each call of \eqref{eq:k_kle_pce_coeff} is assimilated, the algorithm adjusts the TT ranks automatically according to the given accuracy, and the output is returned as a single optimally-compressed TT format, convenient for further processing.

\begin{algorithm}[t]
 \caption{Block cross approximation of a tensor in the TT format} \label{alg:cross}
 \begin{algorithmic}[1]
  \REQUIRE A function to evaluate $u_\ell(\alpha_1,\ldots,\alpha_M)$, initial TT guess $u^{(1)}(\alpha_1)\cdots u^{(M)}(\alpha_M)$, relative accuracy threshold $\eps$.
  \ENSURE  Improved TT approximation $u^{(1)}(\ell,\alpha_1) u^{(2)}(\alpha_2) \cdots u^{(M)}(\alpha_M)$.
  \STATE Initialize $\mathbb{I}^{(0)}=[]$, \quad $U^{(1:0)}(\mathbb{I}^{(0)})=1$, \quad $\mathbb{J}^{(M)} = []$, \quad $U^{(M+1:M)}(\mathbb{J}^{(M)}) = 1$.
\FOR{$\mathrm{iteration}=1,2,\ldots,n_{it}$ or until convergence}
  \FOR[Forward sweep]{$k=1,2,\ldots,M-1$}
    \IF[All indices are available, assimilate the information]{$\mathrm{iteration}>1$}
      \STATE Evaluate the tensor at cross indices $v^{(k)}_\ell(\alpha_k) = u_\ell \left(\mathbb{I}^{(k-1)}, \alpha_k, \mathbb{J}^{(k)}\right) \in \mathbb{C}^{r_{k-1} \times r_k}$.
      \STATE Invert the matrices $M^{(k-1)} = \left(U^{(1:k-1)}(\mathbb{I}^{(k-1)})\right)^{-1}$, \quad $M^{(k)} = \left(U^{(k+1:M)}(\mathbb{J}^{(k)})\right)^{-1}$.
      \STATE Compute the common block $\hat u^{(k)}(\alpha_k,\ell) = M^{(k-1)} v^{(k)}_\ell(\alpha_k) M^{(k)}$.
      \STATE Compute truncated SVD $\hat u^{(k)}(\alpha_k,\ell) \approx u^{(k)}(\alpha_k) \diag(\sigma) V(\ell)$, ensure accuracy $\eps$.
    \ELSE[Warmup sweep: the indices are yet to be built]
      \STATE Find QR decomposition $u^{|k\rangle} = q^{|k\rangle} R$, $\left(q^{|k\rangle}\right)^* q^{|k\rangle } =I$.
      \STATE Replace $u^{\langle k+1 |} = R u^{\langle k+1 |}$, \quad  $u^{|k\rangle}=q^{|k\rangle}$.
    \ENDIF
    \STATE Compute $V^{\langle k |} = U^{(1:k-1)}(\mathbb{I}^{(k-1)}) u^{\langle k |}$.
    \STATE Find \emph{local} maxvol indices $\hat i_k = \mathtt{maxvol}\left(V^{|k\rangle}\right) \subset \{1,\ldots,r_{k-1}(p_k+1)\}$.
    \STATE Sample global indices $\mathbb{I}^{(k)} = \left\{\mathbb{I}^{(k-1)}, \alpha_k\right\}(\hat i_k)$, \quad $U^{(1:k)}(\mathbb{I}^{(k)}) = V^{|k\rangle}(\hat i_k) \in \mathbb{C}^{r_{k} \times r_{k}}$.
  \ENDFOR
  \FOR[Backward sweep]{$k=M,M-1,\ldots,2$}
    \STATE Evaluate the tensor at cross indices $v^{(k)}_\ell(\alpha_k) = u_\ell \left(\mathbb{I}^{(k-1)}, \alpha_k, \mathbb{J}^{(k)}\right) \in \mathbb{C}^{r_{k-1} \times r_k}$.
    \STATE Invert the matrices $M^{(k-1)} = \left(U^{(1:k-1)}(\mathbb{I}^{(k-1)})\right)^{-1}$, \quad $M^{(k)} = \left(U^{(k+1:M)}(\mathbb{J}^{(k)})\right)^{-1}$.
    \STATE Compute the common block $\hat u^{(k)}(\ell,\alpha_k) = M^{(k-1)} v^{(k)}_\ell(\alpha_k) M^{(k)}$.
    \STATE Compute truncated SVD $\hat u^{(k)}(\ell,\alpha_k) \approx V(\ell) \diag(\sigma) u^{(k)}(\alpha_k)$, ensure accuracy $\eps$.
    \STATE Compute $V^{|k\rangle} = u^{| k \rangle}  U^{(k+1:M)}(\mathbb{J}^{(k)})$
    \STATE Find \emph{local} maxvol indices $\hat j_k = \mathtt{maxvol}\left(V^{\langle k |}\right) \subset \{1,\ldots,(p_k+1) r_k\}$.
    \STATE Global indices $\mathbb{J}^{(k-1)} = \left\{\alpha_k, \mathbb{J}^{(k)}\right\}(\hat j_k)$, \quad $U^{(k:M)}(\mathbb{J}^{(k-1)}) = V^{\langle k |}(\hat j_k) \in \mathbb{C}^{r_{k-1} \times r_{k-1}}$.
  \ENDFOR
  \STATE Evaluate the first block $u^{(1)}(\ell,\alpha_1) = u_\ell \left(\alpha_1, \mathbb{J}^{(2)} \right) \left(U^{(2:M)}(\mathbb{J}^{(2)})\right)^{-1}$.
\ENDFOR
 \end{algorithmic}
\end{algorithm}

For the uniformity of the explanation we assume that we have a procedure that, given an index $\alpha_1,\ldots,\alpha_M$, throws $L$ values $u_\ell(\alpha)$, $\ell=1,\ldots,L$.
In other words, we will call $u_\ell(\alpha):=\tilde\kappa_{\alpha}(\ell)$ until the end of this section.
When the block $u_{\ell}(\mathbb{I}^{(k-1)},\alpha_k,\mathbb{J}^{(k)})$ is evaluated, we modify \eqref{eq:ttcross} as follows:
\begin{equation}
\hat u^{(k)}_{s_{k-1},s_k}(\alpha_k,\ell) = \sum_{s_{k-1}', s_k'} M^{(k-1)}_{s_{k-1},s_{k-1}'} u_{\ell}\left(\mathbb{I}^{(k-1)}_{s_{k-1}'},\alpha_k,\mathbb{J}^{(k)}_{s_k'}\right) M^{(k)}_{s_{k}',s_{k}}.
\label{eq:ttb_cross}
\end{equation}
We are now looking for the basis in $\alpha$ that is best suitable for all $u_{\ell}$.
Hence, we compute the (truncated) singular value decomposition (cf. Alg. \ref{alg:TT_svd_r}):
\begin{equation}
\hat u^{(k)}_{s_{k-1},s_k}(\alpha_k,\ell) \approx \sum_{s_k'=1}^{r_k'} u^{(k)}_{s_{k-1},s_k'}(\alpha_k)  \sigma_{s_k'} V_{s_k',s_k}(\ell).
\label{eq:btt_svd}
\end{equation}
The new rank $r_k'$ is usually selected via the Frobenius-norm error criterion: we choose a minimal $r_k'$ such that $\|\hat u^{(k)} -  u^{(k)} \mathrm{diag}(\sigma) V\| \le \eps \|\hat u^{(k)}\|$.
Nothing else than the left singular vectors are collected into the updated TT block $u^{(k)}$, and the TT rank is updated, $r_k:=r_k'$.
After that, we proceed to the next block $k+1$, and so on.

Analogous scheme is written for the backward iteration, i.e. if we go to the block $k-1$ in the next step.
We usually go back and forth through the tensor train in such an alternating fashion until the TT representation stabilizes, or the maximal number of iterations is hit.
We summarize the whole procedure in the \emph{Block TT-Cross} Algorithm \ref{alg:cross}. \\

\section{Statistical calculations in the TT format}

\subsection{Computation of the PCE in the TT format via the cross interpolation}\label{sec:tt-kle-pce}
Equipped with Alg. \ref{alg:cross}, we may apply it for the PCE approximation, passing Formula \eqref{eq:k_kle_pce_coeff} as a function $u_\ell(\alpha)$ that evaluates tensor values on demand.
The initial guess may be even a rank-1 TT tensor with all blocks populated by random numbers, since the cross iterations will adapt both the representation and TT ranks.

Formula \eqref{eq:ttb_cross} together with \eqref{eq:k_kle_pce_coeff} requires $\mathcal{O}(r^2 p (MN+NL) + r^3 pL)$ operations,
and the complexity of the SVD \eqref{eq:btt_svd} is $\mathcal{O}(r^3 p L \cdot \min\{p,L\})$,
so it is unclear in general which term will dominate.
For large $N$, we are typically expecting that it is the evaluation \eqref{eq:ttb_cross}.
However, if $N$ is moderate (below $1000$), but the rank is large ($\sim 100$), the singular value decomposition may win the race.
For the whole algorithm, assuming also $L \sim M$, we can thus claim the $\mathcal{O}(n_{it} M^2 N p r^3)$ complexity,
which is lower than $\mathcal{O}(M p L^3)$ that we could had if we run independent cross algorithms for each $\ell$.

As soon as the reduced PCE coefficients $\tilde\kappa_{\alpha}(\ell)$ are computed, the initial expansion \eqref{eq:k_kle_pce} comes easily.
Indeed, after the backward cross iteration, the $\ell$ index lives in the first TT block, and we may let it play the role of a ``zeroth'' TT rank index,
\begin{equation}
\tilde\kappa_{\alpha}(\ell) = \sum_{s_1,\ldots,s_{M-1}} \kappa^{(1)}_{\ell,s_1}(\alpha_1) \kappa^{(2)}_{s_1,s_2}(\alpha_2) \cdots \kappa^{(M)}_{s_{M-1}}(\alpha_M).
\end{equation}
For $\ell=0$ we extend this formula such that $\tilde\kappa_{\alpha}(0)$ is the first identity vector $\delta_0$, cf. \eqref{eq:f}.
Now, collect the spatial components into the ``zeroth'' TT block,
\begin{equation}
\kappa^{(0)}(x) = \begin{bmatrix} \kappa^{(0)}_\ell(x) \end{bmatrix}_{\ell=0}^{L} = \begin{bmatrix}\bar\kappa(x) & v_1(x) & \cdots v_L(x) \end{bmatrix},
\end{equation}
then the PCE \eqref{eq:pce_k} writes as the following TT format,
\begin{equation}
\kappa_{\alpha}(x) = \sum_{\ell,s_1,\ldots,s_{M-1}} \kappa^{(0)}_{\ell}(x) \kappa^{(1)}_{\ell,s_1}(\alpha_1) \cdots \kappa^{(M)}_{s_{M-1}}(\alpha_M).
\label{eq:k_pce_tt}
\end{equation}

\subsection{Stiffness Galerkin operator in the TT format}\label{sec:stiff_tt}

With the sparse set $\J_{M,p}^{sp}$, we compute the operator directly by \eqref{eq:K_gen}, evaluating all elements of $\BDelta$ explicitly \cite{ZanderDiss, sglib}.
This is a very common way in stochastic Galerkin method.
It takes therefore $\mathcal{O}((\#\J_{M,p}^{sp})^3)$ operations and storage, and can be a bottleneck in the solution scheme, see the numerical experiments below.

In the TT format with the full set $\J_{M,p}$ we may benefit from the rank-1 separability of $\BDelta$ \eqref{eq:Delta}, see also \cite{wahnert-stochgalerkin-2014}.
Indeed, each index $\alpha_m,\beta_m,\nu_m$ varies in the range $\{0,\ldots,p\}$ independently on the others.
Given the PCE \eqref{eq:k_pce_tt} in the TT format, we split the whole sum over $\nu$ in \eqref{eq:K_gen} to the individual variables,
\begin{equation}
\sum\limits_{\nu \in \J_{M,p}} \BDelta_{\nu} \tilde \kappa_{\nu}(\ell) = \sum_{s_1,\ldots,s_{M-1}}  \left(\sum_{\nu_1=0}^{p} \Delta_{\nu_1} \kappa^{(1)}_{\ell,s_1}(\nu_1)\right) \otimes \cdots \otimes \left(\sum_{\nu_M=0}^{p} \Delta_{\nu_M} \kappa^{(M)}_{s_{M-1}}(\nu_M) \right).
\label{eq:deltakappa_rank1}
\end{equation}
Similar reduction of a large summation to one-dimensional operations arises also in lattice structured calculations of interaction potentials \cite{vekh-lattice-2014}.
Introduce the spatial ``zeroth'' TT block for the operator, agglomerating \eqref{eq:K_l},
\begin{equation*}
\K^{(0)}(i,j) = \begin{bmatrix} \K^{(0)}_\ell(i,j) \end{bmatrix}_{\ell=0}^{L} = \begin{bmatrix}K_0(i,j) & K_1(i,j) & \cdots & K_L(i,j) \end{bmatrix}, \quad i,j=1,\ldots,N,
\end{equation*}
and denote the parametric blocks in \eqref{eq:deltakappa_rank1} as
$\K^{(m)}_{s_{m-1},s_m} = \sum_{\nu_m=0}^{p} \Delta_{\nu_m} \kappa^{(m)}_{s_{m-1},s_m}(\nu_m)$ for $m=1,\ldots,M$,
then the TT representation for the operator writes
\begin{equation}
\K = \sum_{\ell,s_1,\ldots,s_{M-1}} \K^{(0)}_{\ell} \otimes \K^{(1)}_{\ell,s_1} \otimes \cdots \otimes \K^{(M)}_{s_{M-1}} \in \mathbb{R}^{(N \cdot \#\J_{M,p}) \times (N \cdot \#\J_{M,p})},
\label{eq:K_tt}
\end{equation}
where the TT ranks coincide with those of $\tilde \kappa$ (this is important).

One interesting property of the Hermite triples is that $\Delta_{\alpha,\beta,\nu}=0$ if e.g. $\nu>\alpha+\beta$.
That is, if we set the same $p$ for $\alpha$, $\beta$ and $\nu$, in the assembly of \eqref{eq:K_gen} we may miss some components, corresponding to $\alpha>p/2$, $\beta>p/2$.
Therefore, it could be reasonable to vary $\nu$ in the range $\{0,\ldots,2p\}$, and hence assemble $\tilde \kappa$ in the set $\J_{M,2p}$.
However, in the sparse set it would inflate the storage of $\BDelta$ and $\K$ significantly.
Contrarily, in the TT format it is not a harm: first, the TT ranks barely depend on $p$, and hence the storage scales linearly; second, we may not care about the sparsity of the matrix handled in the TT format.
So, we may assure ourselves with the exact calculation of the stiffness operator.


\subsection{Computation of level sets, frequency, mean and maximal values, and variance in the TT format}\label{subsec:levelSets}
In this section we discuss how to calculate standard statistical outputs from the response surface, staying in the TT format.

The very first example is the transition from the Galerkin indices $\alpha$ (e.g. expansion \eqref{eq:pce_k_coeffs}) to the stochastic coordinates $\theta$, according to \eqref{eq:pce_k}.
Since $\J_{M,p}$ is a tensor product set, Formula \eqref{eq:pce_k} can be evaluated in the TT format with no change of the TT ranks, similarly to the construction of the stiffness matrix in the previous subsection,
\begin{equation}
u(x,\theta) = \sum_{s_0,\ldots,s_{M-1}} u^{(0)}_{s_0}(x) \left(\sum_{\alpha_1=0}^{p} h_{\alpha_1}(\theta_1) u^{(1)}_{s_0,s_1}(\alpha_1)\right) \cdots \left(\sum_{\alpha_M=0}^{p} h_{\alpha_M}(\theta_M) u^{(M)}_{s_{M-1}}(\alpha_M)\right).
\label{eq:alpha_to_theta}
\end{equation}

Other examples, which are important for the computation of cumulative distributions, are the characteristic, frequency and level set functions \cite{litvinenko-spde-2013}.
\begin{definition}[Characteristic, Level Set, Frequency]\label{defn:LevelSetFrequency}
Let $\mathbb{I} \subset \mathbb{R}$ be a subset of real numbers.
\begin{itemize}
\item The \emph{characteristic} of $u$ at $\mathbb{I}$ is defined pointwise for all $\thetab = (\theta_1,\ldots,\theta_M) \in \mathbb{R}^{M}$ as follows,
\begin{equation}\label{equ:chi}
    \chi_{\mathbb{I}}(\theta):=\left\{
                         \begin{array}{ll}
                           1, & u(\thetab) \in \mathbb{I}, \\
                           0, & u(\thetab) \nin \mathbb{I}.
                         \end{array}
                       \right.
\end{equation}
\item The \emph{level set} reads $\mathcal{L}_{\mathbb{I}}(\thetab):= u(\thetab) \chi_{\mathbb{I}}(\thetab)$.
\item The \emph{frequency} is a number of $u(\theta)$ values, falling into $\mathbb{I}$, i.e.
$$\mathcal{F}_{\mathbb{I}}:= \# \supp \chi_{\mathbb{I}} = \sum_{\theta_1}\cdots\sum_{\theta_M} \chi_{\mathbb{I}}(\theta_1,\ldots,\theta_M).$$
\end{itemize}
Note that the frequency is defined only if the set of admissible values for $\theta$ is finite, while other functions allow continuous spaces.
\end{definition}

The \emph{mean} value of $u$, in the same way as in $\kappa$, can be derived as the PCE coefficient at $\alpha=(0,\ldots,0)$, $\bar{u}(x) = u_{0}(x)$.
The \emph{covariance} is more complicated and requires both multiplication (squaring) and summation over $\alpha$.
By definition, the covariance reads
\begin{equation*}
\begin{split}
\cov_u(x,y) & = \int\limits_{\mathbb{R}^M} \left(u(x,\thetab) - \bar u(x)\right) \left(u(y,\thetab) - \bar u(y)\right) d\mu(\thetab) \\
 & = \sum\limits_{\substack{\alpha,\beta \neq (0,\ldots,0), \\ \alpha, \beta \in \mathcal{J}_{M,p}}} u_{\alpha}(x) u_{\beta}(y) \int H_{\alpha}(\thetab) H_{\beta}(\thetab) d\mu(\thetab).
\end{split}
\end{equation*}
Knowing that $\int H_{\alpha}(\thetab) H_{\beta}(\thetab) d\mu(\thetab) = \alpha! \delta_{\alpha,\beta}$,
we take the parametric chunk of the TT decomposition of $u_{\alpha}$, multiply it with the Hermite mass matrix (rank-preserving),
\begin{equation}
v_{\alpha}(\ell) :=  u_{\alpha}(\ell) \sqrt{\alpha!} =  \sum_{s_1,\ldots,s_{M-1}} \left(u^{(1)}_{\ell,s_1}(\alpha_1) \sqrt{\alpha_1!} \right) \cdots  \left(u^{(M)}_{s_{M-1}}(\alpha_M) \sqrt{\alpha_M!} \right),
\label{eq:param_chunk_masscorr}
\end{equation}
and then take the scalar product $C = \left[C_{\ell,\ell'}\right]$, where $C_{\ell,\ell'} = \dotprod{v(\ell)}{v(\ell')}$ with $v$ defined in \eqref{eq:param_chunk_masscorr}, using Algorithm \ref{alg:TT_dot}.
Given the TT rank bound $r$ for $u_{\alpha}(\ell)$, we deduce the $\mathcal{O}(Mpr^3)$ complexity of this step\footnote{Note that this estimate does not contain $L$. Since $u_{\alpha}(\ell)$ is represented in the common TT format for all $\ell$, the KLE order $L$ plays the role of the TT rank, and is already accounted for in $r$, e.g. $L \le r$}.
After that, the covariance is given by the product of $C$ with the spatial TT blocks,
\begin{equation}
\cov_u(x,y) = \sum\limits_{\ell,\ell'=0}^{L} u^{(0)}_\ell(x) C_{\ell,\ell'} u^{(0)}_\ell(y),
\label{eq:cov_tt}
\end{equation}
where $u^{(0)}_{\ell}$ is the ``zeroth'' (spatial) TT block of the decomposition \eqref{eq:alpha_to_theta}.
Given $N$ degrees of freedom for $x$, the complexity of this step is $\mathcal{O}(N^2 L^2)$.
Note that a low-rank tensor approximation of a large covariance matrix is very important in e.g. Kriging  \cite{LitvNowak13}.
The \emph{variance} is nothing else than the diagonal of the covariance, $\var_u(x) = \cov_u(x,x)$.

The functions in Def. \ref{defn:LevelSetFrequency} are more difficult to compute.
Given the characteristic, the level set is yielded by the Hadamard product in the TT format, and the frequency is the scalar product with the all-ones tensor, $\mathcal{F}_{\mathbb{I}} = \dotprod{\chi_{\mathbb{I}}}{\mathtt{1}}$.
However, the characteristic is likely to develop large TT ranks, unless $\mathbb{I}$ is such that $\chi_{\mathbb{I}}(\theta)=1$ selects a hypercube of $\theta$, aligned to the coordinate axes of $\mathbb{R}^M$.
It can be computed using either the cross Algorithm \ref{alg:cross}, which takes Formula \eqref{equ:chi} as the function that evaluates a high-dimensional array $\chi$ at the index $\thetab$, or the Newton method for the sign function.
In both cases we may face a rapid growth of TT ranks during the cross or Newton iterations.

The approximate \emph{maximum} of the response surface can be obtained during the cross approximation Algorithm \ref{alg:cross}: it was found that the maxvol indices are good candidates for the global maximum in a low-rank matrix \cite{gostz-maxvol-2010}, so we may look for the extremal element among  $u\left(\mathbb{I}^{(k-1)}, \alpha_k, \mathbb{J}^{(k)}\right)$ only.

\subsection{Sensitivity analysis in the TT format}\label{sec:SA}
Another interesting counterpart of the covariance is the \emph{Sobol sensitivity index} \cite{crestaux-sapce-2009}, the ``partial'' variance,
\begin{equation}
S_q = \dfrac{D_q}{D}, \qquad D_q = \int\limits_{\mathbb{R}^{|q|}} u_q^2(x,\thetab_q) d\mu(\thetab_q), \quad D = D_{1,\ldots,M},
\label{eq:sobol}
\end{equation}
where $q \subset \{1,\ldots,M\}$ is a set of stochastic variables of interest, the sub-vector of $\thetab$ is selected as $\thetab_q = (\theta_{q_1},\ldots,\theta_{q_{|q|}})$ and the solution is restricted to $q$ as follows,
\begin{equation}
u_q(x,\thetab_q) = \int\limits_{\mathbb{R}^{M-|q|}} u(x,\thetab) d\mu(\thetab_{\perp q}) - \sum\limits_{t \subset q,~t \neq q} u_t(x,\thetab_t),
\label{eq:u_q}
\end{equation}
where $\thetab_{\perp q}$ is the complement to $\thetab_q$, i.e. $\thetab$ with $q$ \emph{excluded}.
The sensitivity indices can be nicely computed in the TT format, similarly to the covariance.
First, supposing without loss of generality that $\theta_{1},\ldots,\theta_{\tau}$ are contained in $\thetab_{\perp q}$, the integration w.r.t. $d\mu(\thetab_{\perp q})$ is performed simply by extraction of the zeroth-order polynomial coefficients in the corresponding variables,
\begin{equation*}
\hat u_{q}(l,\alpha_{q}) = \sum_{s_1,\ldots,s_{M-1}} \left(u^{(1)}_{l,s_1}(0) \cdots u^{(\tau)}_{s_{\tau-1},s_{\tau}}(0) \right) u^{(\tau+1)}_{s_{\tau},s_{\tau+1}}(\alpha_{\tau+1}) \cdots u^{(M)}_{s_{M-1}}(\alpha_M),
\end{equation*}
which can be seen as a $|q|$-dimensional TT format.
Proceeding in the same way as in \eqref{eq:alpha_to_theta}, we may cast $\hat u_{q}(l,\alpha_{q}) \rightarrow \hat u_{q}(x,\thetab_{q})$, which is exactly the integral in \eqref{eq:u_q}.
However, the Sobol formula \eqref{eq:sobol} does not actually require it.
We evaluate
\begin{equation}
u_q(l,\alpha_q) = \hat u_{q}(l,\alpha_q) - \sum\limits_{t \subset q,~t \neq q} \hat u_t(l,\alpha_t),
\label{eq:u_q_l}
\end{equation}
and as in the previous subsection, multiply $u_{q}$ with the mass matrix like in \eqref{eq:param_chunk_masscorr} and obtain $v_{q}(l,\alpha_q)$.
Computing the TT-scalar product $C_q=\dotprod{v_q}{v_q}$, and multiplying it with the spatial blocks as in \eqref{eq:cov_tt}, we obtain $D_q$ \eqref{eq:sobol}.

On the other hand, we may think of using rough approximate sensitivities to estimate the ``importance'' of stochastic variables, and hence their optimal order in the TT format.
A proper ordering of indices in a tensor product format can reduce the ranks and storage drastically.


\section{Numerical Experiments}
\label{sec:numerics}
We verify the approach on the elliptic stochastic equation \eqref{eq:elliptic} with $f = f(x) = 1$.
We assume that the permeability coefficient $\kappa(x,\omega)$ obeys the $\beta\{5,2\}$-distribution, shifted by $1$ to ensure the non-negativity,
$$
\kappa(x,\omega) = 1+\gamma(x,\omega), \qquad \gamma(x,\omega)\in[0,1] \quad \mbox{obeys} \quad P(\gamma)=\frac{1}{B(5,2)} \gamma^4 (1-\gamma)^1,
$$
and the covariance matrix $\cov_{\kappa}(x,y) = \exp\left(-(x-y)^2/\sigma^2\right)$ with $\sigma=0.3$.
This Gaussian covariance matrix yields formally an exponential decay of the KLE coefficients \cite{SCHWAB2006,Bieri2009, Hcovariance},
but the actual rate is rather slow, due to the small correlation length $\sigma$.
The spatial domain $D$ is the two-dimensional $L$-shape area, cut from the square $[-1,1]^2$, see Fig. \ref{fig:u_space}.
To generate the spatial mesh, we use the standard PDE Toolbox in MATLAB with one level of refinement,
which yields $557$ degrees of freedom in total, and $477$ inner points.

All utilities related to the Hermite PCE are taken from the \emph{sglib} \cite{sglib}, including discretization and solution routines in the sparse polynomial set $\J_{M,p}^{sp}$.
However, to work with the TT format (for full $\J_{M,p}$), we employ the \emph{TT-Toolbox} \cite{tt-toolbox}.
We use the modules of \emph{sglib} for low-dimensional stages (e.g. KLE of the covariance matrix),
and replace the parts corresponding to high-dimensional calculations by the TT algorithms.
The two most important instances are the following: \texttt{amen\_cross.m} from the TT-Toolbox, the block cross interpolation Algorithm \ref{alg:cross} for the TT approximation of $\tilde \kappa_{\alpha}$ \eqref{eq:k_kle_pce_coeff},
and \texttt{amen\_solve.m} from the companion package tAMEn \cite{tamen}, the linear system solver in the TT format (see \cite{ds-amr1-2013,ds-amr2-2013,d-tamen-2014}).
Computations were conducted in MATLAB R2013b on a single core of the Intel Xeon E5-2670 CPU at 2.60GHz, provided by the Max Planck Institute, Leipzig.

The solution scheme consists of three stages: generation of the random permeability coefficient $\kappa(x,\omega)$ (the right hand side and the Dirichlet boundary conditions are assumed deterministic), computing of the stochastic stiffness (Galerkin) matrix, and the solution of the linear system.
We first report the performances of each stage separately, since they are significantly different,
and in the end we show the agglomerated results of the whole solution process.

\subsection{Computation of the PCE for the permeability coefficient}
To evaluate the coefficient according to \eqref{eq:k_kle_pce_coeff}, \eqref{eq:k_kle_pce} directly, which is feasible in the sparse set $\J_{M,p}^{sp}$,
we use the \emph{sglib} routine \texttt{expand\_field\_kl\_pce}.
For the TT calculations with $\J_{M,2p}$, we prepare the KLE components, and run the \texttt{amen\_cross} (see Algorithm \ref{alg:cross}) with the TT approximation threshold $\eps=10^{-4}$.

The computational times are shown in Table \ref{tab:k_time}.
The most time consuming step is the cross algorithm, which is $10$--$50$ times slower, compared to the sparse evaluation.
This is due to the relatively high complexity of the SVD operations in the former, which face the TT ranks up to a hundred.
However, notice that the complexity of the TT method scales linearly with $p$, while in the sparse set it may increase drastically.

Another issue is the rapid growth of the computational time with the stochastic dimension.
This phenomenon reflects the slow decay of the KL eigenvalues: since $\sqrt{\mu_M}/\sqrt{\mu_1}$ in \eqref{eq:k_kle} is larger than $\eps$, all components $v_l$ and $g_m$ are treated as significant, and hence the TT ranks grow linearly (or even a bit stronger) with the dimension.
For example, $r\sim 70$ for $M=10$, but $r \sim 200$ already for $M=30$.
This is an intrinsic property of the problem with the given distribution parameters, in particularly the correlation length $\sigma=0.3$ in the covariance matrix.
Typically, one may expect faster decay rates for larger correlation lengths, and hence faster computations.

\begin{table}[t]
\caption{CPU times (sec.) of the permeability assembly}
\label{tab:k_time}
\begin{tabular}{c|ccc|ccc}
    & \multicolumn{3}{c|}{Sparse} & \multicolumn{3}{c}{TT} \\ \hline
$p$ $\backslash$ $M$ & 10     & 20      & 30      & 10      & 20     & 30 \\ \hline
1                    & 0.2924 & 0.3113  & 0.3361  &  3.6425 & 68.505 & 616.97  \\
2                    & 0.3048 & 0.3556  & 0.4290  &  6.3861 & 138.31 & 1372.9  \\
3                    & 0.3300 & 0.5408  & 1.0302  &  8.8109 & 228.92 & 2422.9  \\
4                    & 0.4471 & 1.7941  & 6.4483  &  10.985 & 321.93 & 3533.4  \\
5                    & 1.1291 & 7.6827  & 46.682  &  14.077 & 429.99 & 4936.8  \\
\end{tabular}
\end{table}

\begin{table}[t]
\caption{Discrepancies in the permeability coefficients at $\J_{M,p}^{sp}$}
\label{tab:k_err}
\begin{tabular}{c|ccccc}
$p$    &  1       & 2        & 3        & 4        & 5 \\ \hline
$M=10$ &  2.21e-4 &  3.28e-5 &  1.22e-5 &  4.15e-5 &  6.38e-5 \\
$M=20$ &  3.39e-4 &  5.19e-5 &  2.20e-5 &  ---     &  --- \\
$M=30$ &  5.23e-2 &  5.34e-2 &  ---     &  ---     &  --- \\
\end{tabular}
\end{table}

Correctness may be verified by comparing those TT coefficients presenting in the sparse multi-index set $\J_{M,p}^{sp}$ with the \emph{sglib} output.
Note that for high $M$ and $p$ the cardinality of $\J_{M,p}^{sp}$ becomes too large, and hence such tests were not conducted.
The average errors are shown in Table \ref{tab:k_err}, and we may observe that the TT procedure delivers accurate coefficients, at least in the most significant components.

The number of evaluations by Formula \eqref{eq:k_kle_pce_coeff} is proportional to $r^2$, and is of the order of $4 \cdot 10^5$ for $M=10$, and $6 \cdot 10^7$ for $M=30$ (and $p=3$), which is nevertheless much smaller than the total amount, e.g. $\# \J_{30,6} \simeq 10^{25}$.
The overhead constant, i.e. the number of evaluations divided by the ultimate number of TT elements, is of the order of $10$ in all tests.

\subsection{Generation of the stiffness elliptic operator}
The Galerkin stiffness operator \eqref{eq:K_gen} was assembled from the PCE of $\kappa$ as described in Section \ref{sec:stiff_tt}.
In the \emph{sglib} computations with $\J_{M,p}^{sp}$, we employ the procedure \texttt{kl\_pce\_compute\_operator\_fast}.
The TT approach does not require a dedicated procedure, since it is a rank-preserving operation.

The computational times are shown in Table \ref{tab:oper_time}.
Here the situation is completely opposite to the previous experiment:
the complexity in the TT format is negligibly small, and stable with respect to both $p$ and $M$,
while in the sparse representation the direct handling of $(\#\J_{M,p}^{sp})^3$ entries
quickly makes the calculations infeasible.
If we need to solve a stochastic equation, not just store a given field $\kappa$,
the TT approach may become preferable even despite the slower PCE and solution procedures,
see Fig. \ref{fig:err_u_var}.
Excessive memory and time demands of the operator assembly stage alone may make the whole sparse technique an outsider.

\begin{table}[t]
\caption{CPU times (sec.) of the operator assembly}
\label{tab:oper_time}
\begin{tabular}{c|ccc|ccc}
    & \multicolumn{3}{c|}{Sparse} & \multicolumn{3}{c}{TT} \\ \hline
$p$ $\backslash$ $M$ & 10     & 20      & 30           & 10     & 20       & 30 \\ \hline
1                    & 0.1226 & 0.2171  & 0.3042       & 0.1124 & 0.2147   & 0.3836 \\
2                    & 0.1485 & 2.1737  & 26.510       & 0.1116 & 0.2284   & 0.5438 \\
3                    & 2.2483 & 735.15  & ---          & 0.1226 & 0.2729   & 0.8403 \\
4                    & 82.402 & ---     & ---          & 0.1277 & 0.2826   & 1.0832 \\
5                    & 3444.6 & ---     & ---          & 0.2002 & 0.3495   & 1.1834 \\
\end{tabular}
\end{table}

\subsection{Solution of the stochastic PDE}
The last stage is the actual solution of the linear system $\K\u = \f$, with
$\K$ computed by \eqref{eq:K_gen} and \eqref{eq:K_tt}, and $\f$ is according to \eqref{eq:f}.
In the sparse \emph{sglib} format, we use the PCG method,
with the deterministic stiffness matrix as a preconditioner, i.e. $P = (K_0)^{-1} \otimes I$,
where $I$ is the identity of the same size as $\#\J^{sp}_{M,p}$.
In the TT approach, the preconditioner is of the same form, but the identity is of size $\#\J_{M,p}$, and is kept in the format.
As the TT solution algorithm, we employ the \texttt{amen\_solve} procedure.

The CPU times of the solution stage are shown in Table \ref{tab:u_time}.
In both sparse and TT schemes, the solution time grows rapidly with both $p$ and $M$, which is subject not only to the cardinalities of the index sets and TT ranks, but also the condition numbers of $\K$, which increase with $p$ and $M$ as well.
Nevertheless, for large $p$ the TT scheme may overcome the sparse approach.
Recall the previous subsection: for extreme cases, the matrix $\K$ just cannot be assembled on the sparse set with reasonable time and memory costs.
Though the TT method may seem slow, in such situations it appears to be more reliable.

We also compare the errors in the outputs with respect to the reference solutions in both representations.
Typically, we need only a few statistical quantities of the solution.
Since the mean value is a particular Galerkin coefficient in our setting, it is resolved up to the accuracy $10^{-6}$ in all tests.
So, we track the behavior of the second moment, i.e. the covariance of the solution.
The reference covariance matrix $\cov_u^{\star} \in \mathbb{R}^{N \times N}$ is computed in the TT format with $p=5$, and the discrepancies in the results with smaller $p$ are calculated in average over all spatial points,
$$
|\cov_u - \cov_u^\star| = \frac{\sqrt{\sum_{i,j}(\cov_u - \cov_u^\star)_{i,j}^2}}{\sqrt{\sum_{i,j}(\cov_u^\star)_{i,j}^2}}.
$$
The results are shown in Table \ref{tab:err_u_var} and Fig. \ref{fig:err_u_var}.
We see that the error in the TT computations is smaller compared to the sparse approach, since the full set $\J_{M,p}$ is wider than the sparse one.
When we choose $p\ge 4$, the error stabilizes at the level of tensor approximation $\eps=10^{-4}$.
Taking into account the computational cost in the right plane of Fig. \ref{fig:err_u_var},
we may conclude that the TT methods become preferable for large $p$, where they deliver the same accuracy for lower complexity.
For low $p$, the sparse ansatz is faster, since it does not involve expensive singular value decompositions.
The mean and variance of the solution are shown in Fig. \ref{fig:u_space}.

\begin{table}[t]
\caption{CPU times (sec.) of the solution}
\label{tab:u_time}
\begin{tabular}{c|ccc|ccc}
    & \multicolumn{3}{c|}{Sparse} & \multicolumn{3}{c}{TT} \\ \hline
$p$ $\backslash$ $M$ & 10     & 20      & 30           & 10      & 20        & 30 \\ \hline
1                    & 0.2291 & 1.169   & 0.4778       & 1.074   & 9.3492    & 51.177  \\
2                    & 0.3088 & 2.123   & 3.2153       & 1.681   & 27.014    & 173.21  \\
3                    & 0.8112 & 14.04   & ---          & 2.731   & 56.041    & 391.59  \\
4                    & 5.7854 & ---     & ---          & 7.237   & 142.87    & 1497.1  \\
5                    & 61.596 & ---     & ---          & 45.51   & 866.07    & 5362.8  \\
\end{tabular}
\end{table}

\begin{table}[t]
\caption{Errors in the solution covariance matrices, $|\cov_u - \cov_u^\star|$}
\label{tab:err_u_var}
\begin{tabular}{c|ccc|ccc}
    & \multicolumn{3}{c|}{Sparse} & \multicolumn{3}{c}{TT} \\ \hline
$p$ $\backslash$ $M$ & 10       & 20        & 30           & 10        & 20      & 30 \\ \hline
1                    & 9.49e-2  & 8.86e-2   & 9.67e-2      & 4.18e-2   & 2.80e-2 & 2.60e-2  \\
2                    & 3.46e-3  & 2.65e-3   & 3.34e-3      & 1.00e-4   & 1.31e-4 & 2.12e-4  \\
3                    & 1.65e-4  & 2.77e-4   & ---          & 4.48e-5   & 1.32e-4 & 2.14e-4  \\
4                    & 8.58e-5  & ---       & ---          & 6.28e-5   & 1.33e-4 & 1.11e-4  \\
\end{tabular}
\end{table}

\begin{figure}[t]
\centering
\caption{Errors in the solution covariance matrices with different $p$ (left), and versus the total CPU time in seconds (right). Number of PCE variables: $M=10$.}
\label{fig:err_u_var}
\includegraphics[width=.49\textwidth]{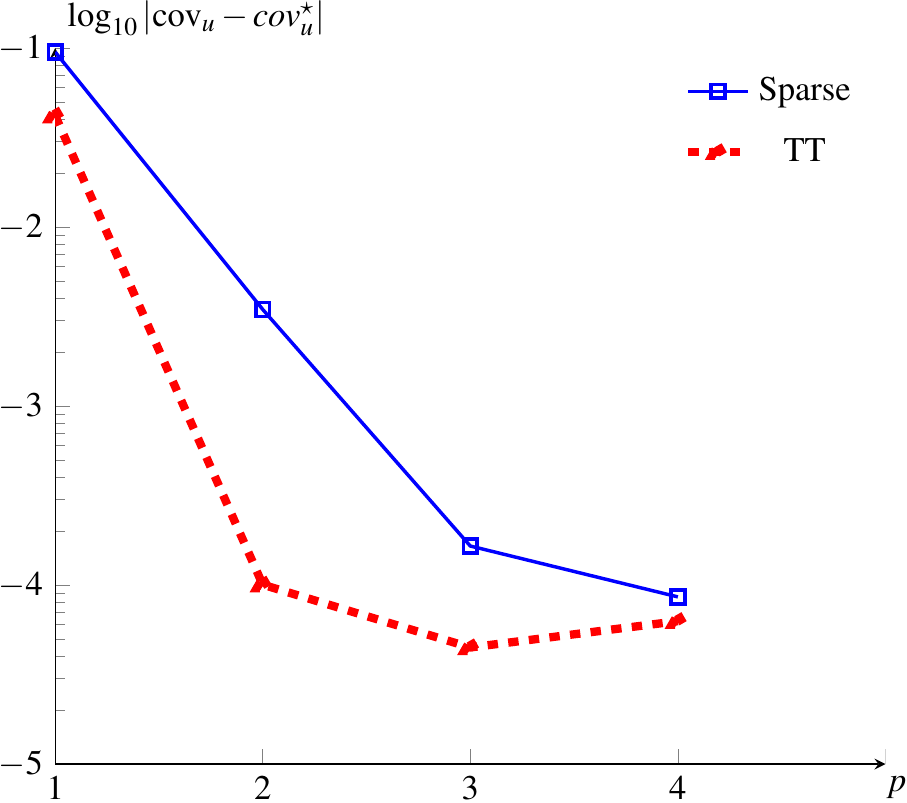} \hfill
\includegraphics[width=.49\textwidth]{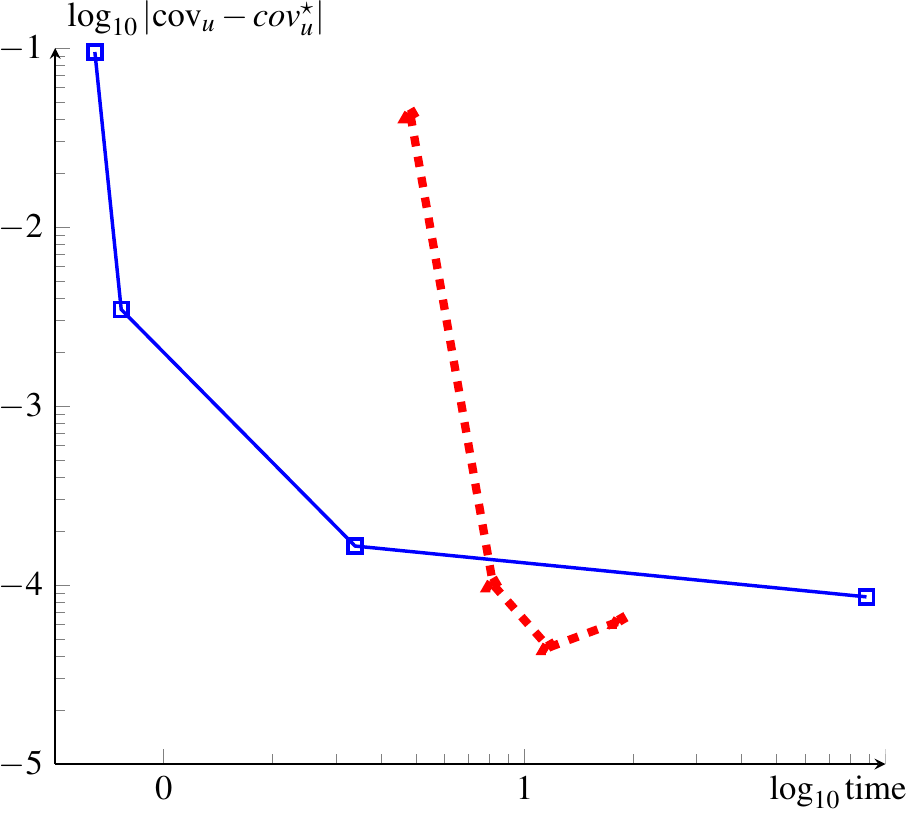}
\end{figure}

\begin{figure}[t]
\centering
\caption{Mean solution (left) and its variance (right). Number of PCE variables: $M=20$, $p=5$.}
\label{fig:u_space}
\includegraphics[width=.49\textwidth]{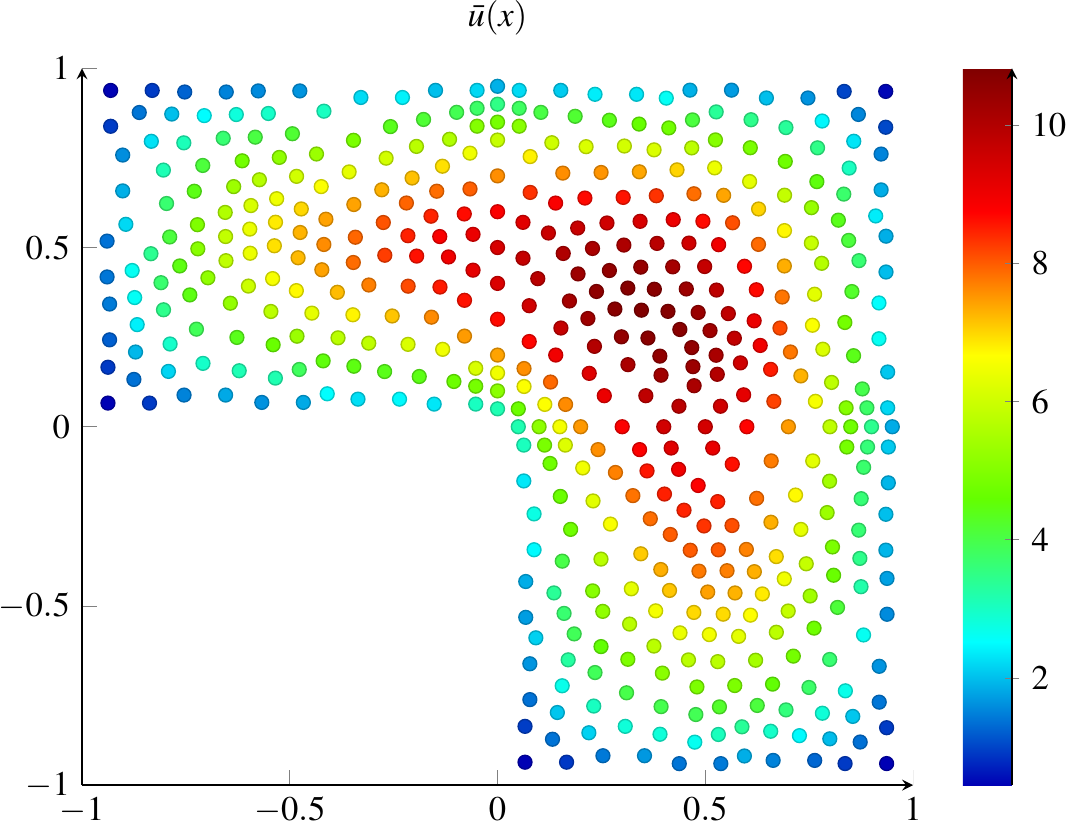} \hfill
\includegraphics[width=.49\textwidth]{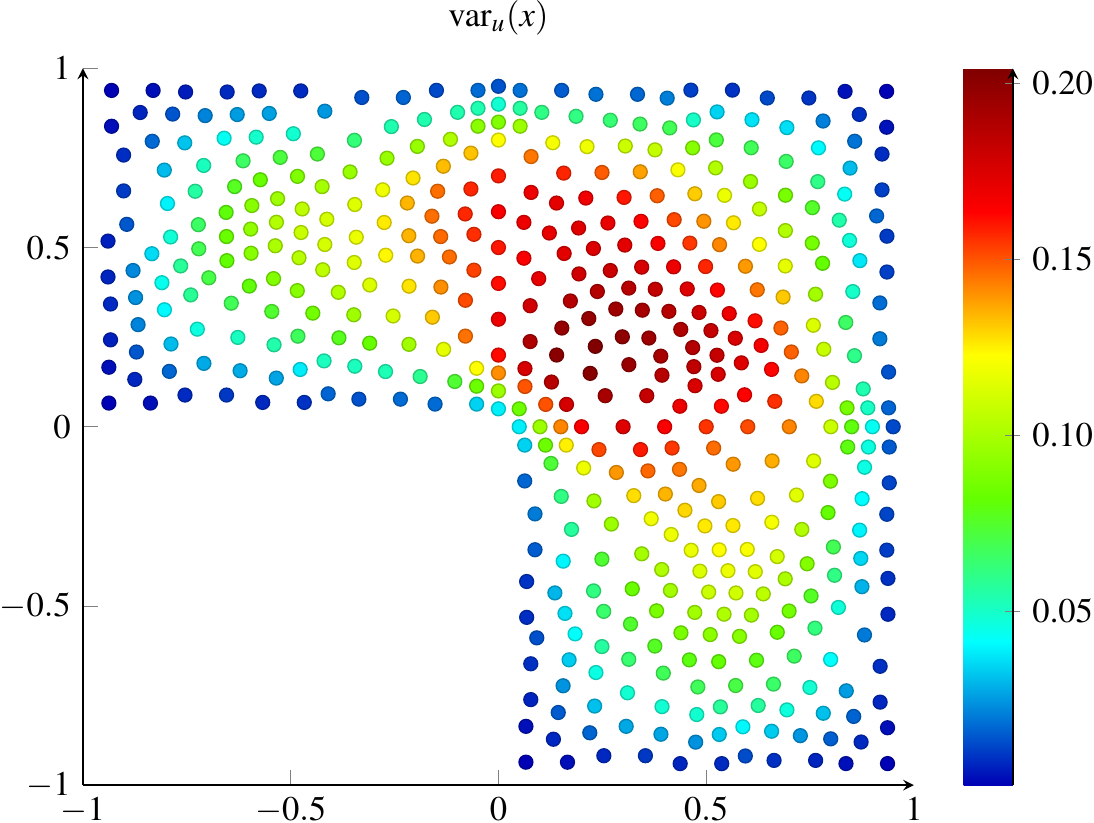}
\end{figure}

\section{Conclusion}

In this work we presented some new techniques for solving elliptic differential equation with uncertain coefficients.
We the applied tensor train (TT) data format to the PCE coefficients, which appear in the stochastic Galerkin approach.
The principal favor of the TT format comparing to e.g. the Canonical Polyadic (CP) decomposition is a stable quasi-optimal rank reduction procedure based on singular value decompositions.
The complexity scales as $\mathcal{O}(Mnr^3)$, i.e. is free from the curse of dimensionality, while the full accuracy control takes place.
We have extended the conception of the cross TT approximation using a few amount of samples to the PCE calculation task, and developed the new Block TT-Cross algorithm.

In Section \ref{subsec:levelSets} we demonstrated how the low-rank tensor representation of the solution can be used for post-processing.
We showed how to compute the mean value, covariance and level sets in the low-rank tensor train data format.

Traditional tools, e.g. the Stochastic Galerkin library (\emph{sglib}), generate random fields and solve stochastic PDEs in a sparse polynomial chaos.
In the considered numerical example, comparison of the sparse PCE and full PCE in the TT format is not so obvious:
the TT methods become preferable for high polynomial orders $p$, but otherwise the direct computation in a small sparse set may be incredibly fast.
This reflects well the ``curse of order'', taking place for the sparse set instead of the ``curse of dimensionality'' in the full set: the cardinality of the sparse set grows exponentially with $p$.

Relatively large $p$ (e.g. $4$--$5$ compared to the traditional $2$--$3$) may be necessary for high accuracies.
The TT approach scales linearly with $p$, and hence is suitable for such demands.
If we need to cast the PCE coefficients to the actual response surface \eqref{eq:alpha_to_theta} as a function on $\theta$, we may additionally employ the QTT approximation \cite{khor-qtt-2011}, to achieve the logarithmic compression w.r.t. the number of degrees of freedom in $\theta$.

The strong side of the TT methods is the easy calculation of the stochastic Galerkin operator.
This task involves only univariate manipulations with the TT format of the coefficient.
Interestingly, we can even compute the Galerkin operator exactly by preparing the coefficient with twice larger polynomial orders than those employed for the solution.
With $p$ below $10$, the TT storage of the operator allows us to forget about the sparsity issues, since the number of TT entries $\mathcal{O}(M p^2 r^2)$ is tractable.
This also means that other polynomial families, such as the Chebyshev or Laguerre, may be incorporated into the scheme freely.

To sum up, the TT formalism may be recommended for stochastic PDEs as a general tool: one introduces the same discretization levels for all variables and let the algorithms determine a quasi-optimal representation adaptively.
Nevertheless, many questions are still open.
Can we endow the solution scheme with more structure and obtain a more efficient algorithm?
Is there a better way to discretize stochastic fields than the KLE-PCE approach?
In the preliminary experiments, we have investigated only the simplest statistics, i.e. the mean and variance.
What quantities outlined in Section \ref{subsec:levelSets} are feasible in tensor product formats and how can they be computed?
We plan to investigate these issues in a future research.

\subsection*{Acknowledgment}
We would like to thank Dr. Elmar Zander for a very nice assistance and help in the usage of the Stochastic Galerkin library \emph{sglib}.\\
Alexander Litvinenko and his research work reported in this publication was supported by the King Abdullah University of Science and Technology (KAUST).\\
A part of this work was done during his stay at Technische Universit\"at Braunschweig and was supported by German DFG Project CODECS "Effective approaches and solution techniques for conditioning, robust design and control in the subsurface".
Sergey Dolgov was partially supported by RSCF grants 14-11-00806, 14-11-00659, RFBR grants 13-01-12061-ofi-m,
14-01-00804-A, and the Stipend of President of Russia at the Institute of Numerical Mathematics of Russian Academy of Sciences.

\end{document}